\newtheorem{theorem}{Theorem}
\newtheorem{corollary}{Corollary}
\newtheorem{proposition}{Proposition}
\newtheorem{lemma}{Lemma}
\theoremstyle{definition}
\theoremstyle{definition}
\title{Strong Law of Large Numbers for Fragmentation Processes}
\author{S. C. Harris\thanks{Department of Mathematical Sciences, University of Bath, Claverton Down, Bath, BA2 7AY, U.K.}, \, R. Knobloch$^*$ and A. E. Kyprianou$^*$\thanks{Corresponding author}}
\begin{document}

\maketitle
 \begin{abstract}
\noindent In the spirit of a classical results for Crump-Mode-Jagers processes, we prove a strong law of large numbers for homogenous fragmentation processes. Specifically, for self-similar fragmentation processes, including homogenous processes, we prove the almost sure convergence of an empirical measure associated with the stopping line corresponding to first fragments of size strictly smaller than $\eta$ for $1\geq \eta >0$.

\noindent Key words: Fragmentation processes,  Strong Law of Large Numbers, Additive Martingales.

\noindent AMS Classification: 60J25, 60G09.

\end{abstract}

\section{Fragmentation Processes}

Fragmentation processes have been the subject of an increasing body of literature and the culmination of this activity has recently been summarised, for example, in the recent book of Bertoin \cite{bertfragbook}. Some of the  mathematical roots of fragmentation processes lay with older families of spatial branching processes that have also seen periods of extensive interest such as branching random walks and Crump-Mode-Jagers processes. Irrespective of modern or classical perspectives, such models exemplify the phenomena of random splitting according to systematic  rules and, as stochastic processes, they may be seen as modelling  the growth of special types of multi-particle systems. In such cases where there is a random accumulation of many particles, from a mathematical perspective, it is natural to 
 look for classical behaviour such as large deviations, central limit theorems, local limit theorems and strong laws of large numbers. Many studies have already been carried out in this spirit, see for example \cite{bertfragbook, bertoinselfsimilar, bertoinasym, bertoinsmall, bertoingnedin, bertoinrouault1, bertoinrouault2, berest, krell}. In this article our aim is to contribute to this family of literature by proving a strong law of large numbers for fragmentation processes. To state more clearly the main result, we shall first devote some time defining several of the quantities involved. 

We are interested in the Markov process  $\mathbf{X}:=\{\mathbf{X}(t) : t\geq 0\}$ where $\mathbf{X}(t) =(X_1(t), X_2(t), \cdots)$ and  takes values in 
\[
\mathcal{S}: = \left\{\mathbf{s}=(s_1, s_2, \cdots) : s\geq  s_2 \geq \cdots \geq 0 , \, \sum_{i=1}^\infty s_i \leq 1\right\},
\]
that is to say, the infinite simplex of decreasing numerical sequences with sum bounded from above by 1. Its probabilities will be denoted by $\{\mathbb{P}_{\bf s}: {\bf s}\in\mathcal{S}\}$ and, for $s\in(0,1]$, we shall reserve the special notation $\mathbb{P}_s$ as short hand for $\mathbb{P}_{(s,0,\cdots)}$ and in particular write $\mathbb{P}$ for $\mathbb{P}_1$.   The process $\mathbf{X}$ possesses the fragmentation property, to be understood as follows. Given that $\mathbf{X}(t)  = (s_1,s_2,\cdots)$ where $t\geq 0$, then for $u>0$, $\mathbf{X}(t+u)$ has the same law as the variable obtained by ranking in decreasing order the sequences $\mathbf{X}^{(1)}(u), \mathbf{X}^{(2)}(u),\cdots$ where the latter are independent, random mass partitions with values in $\mathcal{S}$ having the same distribution as $\mathbf{X}(u)$ under $\mathbb{P}_{s_1}$, $\mathbb{P}_{s_2}$, $\cdots$ respectively. The process $\mathbf{X}$, henceforth called a mass fragmentation process, is said to be self-similar with index $\alpha\in\mathbb{R}$ if further, for every $r>0$, the law of $\{r\mathbf{X}(r^\alpha t): t\geq 0\}$ under $\mathbb{P}$ is $\mathbb{P}_r$.  In the special case that $\alpha=0$, we call $\mathbf{X}$ a homogenous fragmentation process. The monograph of Bertoin \cite{bertfragbook} gives a very precise and complete account of the existence and characterization of such processes. However, for later convenience, we shall provide here some additional but modest insight into the stochastic structure of homogenous processes.

It is known that homogenous fragmentation processes can be characterized by a dislocation measure $\nu$ on $\mathcal{S}$ such that $\nu(\{(1,0,\cdots)\})=0$ and 
\[
\int_{\mathcal{S}}(1-s_1)\nu(d{\bf s})<\infty.
\]
One may also include the possibility of continuous erosion of mass, however, this feature will be excluded in this article. In the case that 
\begin{equation}
\nu\left(\sum_{i=1}^\infty s_i <1\right)=0
\label{conservative}
\end{equation}
we say that the fragmentation process is {\it conservative} and otherwise {\it dissipative}.

Roughly speaking, the dislocation measure specifies the rate at which blocks split so that a block of mass $x$ dislocates into a mass partition $x{\bf s}$, where $\mathbf{s}\in\mathcal{S}$, at rate $\nu(d{\bf s})$. To be more precise, consider a Poisson point process $\{({\bf s}(t), k(t)) : t\geq 0\}$ with values in $\mathcal{S}\times \mathbb{N}$ and intensity measure $\nu\otimes\sharp$ where $\sharp$ denotes the counting measure on $\mathbb{N}=\{1,2,\cdots\}$.  Then the homogenous fragmentation process associated with $\nu$  changes state at all times $t\geq0$ for which an atom $({\bf s}(t), k(t))$ occurs in $\mathcal{S}\backslash\{(1,0,\cdots)\}\times \mathbb{N}$.  At such a time $t$, the sequence $\mathbf{X}(t)$ is obtained from $\mathbf{X}(t-)$ by replacing its $k(t)$-th term, $X_{k(t)}(t-)$, with the sequence $X_{k(t)}(t-){\bf s}(t)$ and ranking the terms in decreasing order. When $\nu(\mathcal{S})<\infty$  there is finite activity over finite intervals of time in the underlying Poisson point process. Otherwise said, individual blocks remain unchanged for exponential periods of time with parameter $\nu(\mathcal{S})$. In this case, by taking the negative logarithm of fragment sizes, the fragmentation process is akin to a Markovian (or continuous time) branching random walk; see for example Biggins \cite{big92} or Uchyama \cite{Uchyama}. One may also think of a fragmentation process in this setting as closely related to a Crump-Mode-Jagers process where the negative logarithm of fragment sizes plays the role of birth times; see for examples Jagers \cite{jagers}. The case that $\nu(\mathcal{S})=\infty$ is the more interesting case  in the sense that there are a countable but infinite number of dislocations over any finite time horizon and therefore, mathematically speaking, many results need to be handled differently to the case of branching random walks or Crump-Mode-Jagers processes.

The above construction focuses on mass partitions, however, it can also be seen as the natural consequence of a more elaborate Poissonian procedure concerning the partition of the natural numbers. Let $\mathcal{P}$ be the space of partitions of the natural numbers. Here a partition of $\mathbb{N}$ is a sequence $\pi=(\pi_1, \pi_2, \cdots)$ of disjoint sets, called blocks, such that $\bigcup_i \pi_i = \mathbb{N}$. The blocks of a partition are enumerated in the increasing order of their least element; that is to say $\min \pi_i\leq \min\pi_j$ when $i\leq j$ (with the convention that $\min \emptyset = \infty$). 
Now consider the measure on  $\mathcal{P}$, 
\begin{equation}
\mu(d\pi) = \int_{\mathcal{S}}\varrho_{\bf s}(d\pi)\nu(d{\bf s}),
\label{implicit}
\end{equation}
 where $\varrho_{\bf s}$ is the law of Kingman's paint-box based on ${\bf s}$ (cf. Chapter 2 of Bertoin \cite{bertfragbook}). It is known that $\mu$ is  an exchangeable partition measure meaning that it is invariant under the action of finite permutations on $\mathcal{P}$.
It is also known (cf. Chapter 3 of Bertoin \cite{bertfragbook}) that it is possible to construct a fragmentation process  on the space of partitions $\mathcal{P}$ with the help of  a Poisson point process on $\mathcal{P}\times \mathbb{N}$, $\{(\pi(t), k(t)) : t\geq 0\}$,  which has intensity measure $\mu\otimes\sharp$. The aforementioned   $\mathcal{P}$-valued fragmentation process is a  Markov process which we denote by  $\Pi = \{\Pi(t) : t\geq 0\}$, where 
 $\Pi(t) = (\Pi_1(t), \Pi_2(t),\cdots)$ 
 is such that at all times $t\geq0$ for which an atom $(\pi(t), k(t))$ occurs in $(\mathcal{P}\backslash\mathbb{N})\times \mathbb{N}$, $\Pi(t)$ is obtained from $\Pi(t-)$ by partitioning the $k(t)$-th block  into the sub-blocks $(\Pi_{k(t)}(t-) \cap \pi_j(t) : j=1,2,\cdots)$.
Thanks to the properties of the exchangeable partition measure $\mu$ 
it can be shown that, for each $t\geq 0$, the distribution of $\Pi(t)$ is exchangeable and moreover,  blocks of $\Pi(t)$  have asymptotic frequencies in the sense that for each $i\in\mathbb{N}$, 
\[
|\Pi_i(t)|:=\lim_{n\uparrow\infty} \frac{1}{n}\sharp\{\Pi_i(t)\cap \{1,\cdots, n\} \}
\]
exists almost surely. Further,  the ranked ordering of these asymptotic frequencies form a homogenous mass fragmentation process with dislocation measure $\nu$. 

For future reference we also note from Proposition 2.8 of Bertoin \cite{bertfragbook},  that the ordered asymptotic frequencies of $\pi$ sampled under $\varrho_{\bf s}$, written $|\pi|^\downarrow$, satisfy $|\pi|^\downarrow={\bf s}$ almost surely  and $|\pi_1|$ is a size-biased sample of ${\bf s}$ almost surely. Here `size-biased sample' means that $\varrho_{\bf s}(|\pi_1| = s_i) =s_i$ for $i=1,2,\cdots$. It thus follows from (\ref{implicit}) and Fubini's theorem that for non-negative test functions $f:[0,1]\rightarrow [0,\infty)$ with $f(0)=0$ and $g:\mathcal{S}\rightarrow [0,\infty)$, 
\begin{eqnarray}
\int_{\mathcal{P}} g(|\pi|^\downarrow) f(|\pi_1|)\mu(d\pi)
&=&\int_{\mathcal{S}}  \int_{\mathcal{P}} g({\bf s})f (|\pi_1|)\varrho_{\bf s}(d\pi)\nu(d{\bf s})\nonumber\\
&=&\int_{\mathcal{S}} g({\bf s}) \left( \int_{\mathcal{P}}  f(|\pi_1|)\varrho_{\bf s}(d\pi)\right)\nu(d{\bf s})\nonumber\\
&=&\int_{\mathcal{S}} g({\bf s}) \left(\sum_{i=1}^\infty s_i f(s_i)\right)\nu(d{\bf s}).\label{mutonu}
 \end{eqnarray}


 In the forthcoming discussion, unless otherwise stated,  we shall exclusively understand $(\mathbf{X}, \mathbb{P})$  to be a homogenous mass fragmentation processes as described above and refer to the underlying $\mathcal{P}$-valued fragmentation process as $\Pi$.

Let us introduce the constant 
\[
\underline{ p}: = \inf\left\{  p\in \mathbb{R} : \int_{\mathcal{S}} \left| 1- \sum_{i=1}^\infty s_i^ {p+1} \right| \nu(d{\bf s}) <\infty\right\}
\]
which is necessarily in $(-1,0]$.
 It is well known that
\[
\Phi( p) = \int_{\mathcal{S}} \left(  1- \sum_{i=1}^\infty s_i^ {p+1} \right)\nu(d{\bf s}) 
\]
is strictly increasing and concave for $ p\in(\underline{p},\infty)$. Let us assume the following. 

\medskip

{\bf (A1):} If $\underline{p}=0$ then  
$$\Phi'(0+) =\int_{\mathcal{S}}\left(\sum_{i=1}^\infty s_i \log \left(\frac{1}{s_i}\right)\right)\nu(d{\rm s})<\infty.
$$

\medskip

The function $\Phi$ has a special meaning in the context of the growth of a typically `tagged' fragment.  If one considers the process $\xi =\{\xi_t : t\geq 0\}$, where 
\[
\xi_t := -\log |\Pi_1(t)|,
 \]
  then the underlying Poissonian structure implies that $\xi$ is a subordinator. 
Moreover $\Phi$ turns out to be its Laplace exponent meaning that   $$\Phi(p) := -t^{-1}\log \mathbb{E}(e^{-p\xi_t})$$ for all $p\geq 0$. The subordinator $\xi$ is  killed at rate $\Phi(0)\geq 0$ (with zero killing rate meaning that there is no killing). Further, when $\underline{p}<0$, $\xi$ has finite mean, that is to say $\Phi'(0)<\infty$ and then same is true when $\underline{p}=0$ thanks to (A1).

 Through the exponent $\Phi$ we may introduce the Malthusian Parameter ${ p^*}$ which is the unique solution to the equation $\Phi(p^*)=0$ when it exists. In the conservative case, it always exists and satisfies $p^*=0$. For the dissipative case, we introduce an extra assumption to cater for its existence. 

\medskip

{\bf (A2):} If $\nu$ is dissipative, there exists a ${ p^*}>\underline{ p}$ such that $\Phi(p^*)=0$.

\medskip

Note that necessarily in the dissipative case $p^*<0$ and then $\Phi(0)>0$.
A second assumption we will need with with regard to the Malthusian parameter in the dissipative case is the following.

\medskip

{\bf (A3):} If $\nu$ is dissipative then there exists a $p_0\in(1,2]$ such that 
\[
\int_{\mathcal{S}} \left(\sum_{i=1}^\infty s_i^{ 1+p^*}\right)^{p_0}\nu(d{\bf s})<\infty.
\]

\vspace{.5 cm}

\noindent {\it From this moment on we shall always assume, unless otherwise stated that assumptions (A1), (A2) and (A3) are in force.}

\vspace{.5 cm}

Let $\mathbf{X}_\eta:=\{X_{\eta, j} : j\geq 1\}$ be an arbitrary enumeration of fragments as they are frozen at the instant they become strictly smaller than $1\geq \eta>0$.  The set $\mathbf{X}_\eta$ is a classic example of the resulting family of fragments obtained when stopping the fragmentation process ${\mathbf{X}}$ at a stopping line (corresponding to the first fragment in its line of decent to be smaller than $\eta$ in size) for which the so-called extended fragmentation property holds. The latter says that, if $\mathcal{F}_\eta$ is the natural filtration generated by sweeping through the fragmentation process up to the stopping line  ${\bf X}_\eta$, then given $\mathcal{F}_\eta$ the subsequent evolution of the fragments in $\mathbf{X}_\eta$  are independent and are copies of $\mathbf{X}$ with respective laws $\mathbb{P}_{X_{\eta,1}}, \, \mathbb{P}_{X_{\eta, 2}},\cdots$. See  Definition 3.4 and Theorem 3.14  of Bertoin \cite{bertfragbook} respectively.  Moreover we may talk of $\{\mathbf{X}_\eta : 1\geq\eta>0\}$  as a monotone sequence of stopping lines since, if $\eta'<\eta$, then every fragment in $\mathbf{X}_{\eta'}$ is either a fragment of $\mathbf{X}_{\eta}$ or the result of a sequence of dislocations of a fragment in $\mathbf{X}_\eta$. For convenience, when $\eta\geq 1$ we shall simply define $\mathbf{X}_\eta$ as a block of unit size.

Bertoin and Martinez \cite{energy} propose the idea of using a fragmentation process to model the crushing of rocks in the mining industry. In that setting, one assumes  that rock fragments, occurring as the consequence of subjecting a single boulder to a continuous crushing process, are representation by the evolution of  $\mathbf{X}$. If rock fragments are no longer subject to crushing the moment that they are small enough to pass through a mesh of fixed diameter, then one may think of  the stopping line $\mathbf{X}_\eta$ as the sizes of what falls through the mesh throughout the entire crushing process.

In the setting of a C-M-J process, where birth times correspond to the negative logarithm of fragment sizes, the analogue of the stopping line $\mathbf{X}_\eta$ is what is known as the {\it coming generation}.

\section{Main result}

The main object of interest in this paper is the following family of random measures on $[0,1]$
\begin{equation}
\rho_\eta(\cdot) = \sum_{j} X_{\eta, j}^{1+p^*} \delta_{X_{\eta, j}/\eta}(\cdot), \qquad 1\geq\eta>0,
\label{empirical}
\end{equation}
where $\delta(\cdot)$ is the dirac measure, assigning unit mass to sets which contain the point $x$.
Here we assume that (A2) is in force.
For all bounded measurable functions $f:[0,1]\rightarrow [0,\infty)$ we write
\[
\langle\rho_\eta, f\rangle =  \sum_{j} X_{\eta, j}^{1+p^*} f(X_{\eta, j}/\eta). 
\]
Our objective is to show  $\langle\rho_\eta, f\rangle$ behaves, as $\eta\downarrow 0$, like the limit of a classical, unit mean martingale, up to a multiplicative constant which depends on $f$.

The aforementioned classical unit mean martingale is  $\{\sum_{i=1}^\infty X^{1+p^*}_i(t), t\geq 0\}$; it is the analogue of the classical additive martingale for branching random walks. In Theorem 1 of \cite{bertoinrouault1-arxiv} the analogue of the classical Biggins' Martingale Convergence Theorem was proved which shows in particular that  
\[
\Lambda(p^*):=\lim_{t\uparrow\infty} \sum_{i=1}^\infty X_i^{1+p^*}(t)
\]
exists in the $L^1(\mathbb{P})$ sense (in addition to being a $\mathbb{P}$-almost sure limit). Note, in the conservative case we have trivially that $\Lambda(p^*)=1$ on account of the fact that $p^*=0$ and total mass is preserved at all times, $\sum_{i=1}^\infty X_i(t) =1$ for all $t\geq 0$.
As a prelude to the main result, the next lemma shows us that the total mass of the empirical measure (\ref{empirical}) is a uniformly integrable martingale with the same limit $\Lambda(p^*)$.

\begin{lemma}\label{firstlemma} We have that  
\[
\langle\rho_\eta,1\rangle = \mathbb{E}(\Lambda(p^*)|\mathcal{F}_\eta)\qquad \mbox{ for }1\geq \eta>0,
\]
showing in particular that $\{\langle\rho_\eta,1\rangle : 1\geq\eta>0\}$ is a uniformly integrable, unit mean martingale. 
\end{lemma}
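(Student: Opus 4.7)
My plan is to combine the classical additive martingale convergence for fragmentations with the extended fragmentation property at the stopping line $\mathbf{X}_\eta$. Write $M(t) := \sum_i X_i(t)^{1+p^*}$ for the unit-mean non-negative martingale that converges a.s.\ and in $L^1(\mathbb{P})$ to $\Lambda(p^*)$. Homogeneity immediately yields $\mathbb{E}_s(\Lambda(p^*)) = s^{1+p^*}$ for every $s \in (0,1]$: the law of $\mathbf{X}$ under $\mathbb{P}_s$ coincides with that of $\{s\mathbf{X}(t) : t \geq 0\}$ under $\mathbb{P}$, so $M(t)$ under $\mathbb{P}_s$ is $s^{1+p^*}$ times $M(t)$ under $\mathbb{P}$.

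I would then invoke the extended fragmentation property at $\mathbf{X}_\eta$ (Theorem 3.14 of \cite{bertfragbook}): conditional on $\mathcal{F}_\eta$, the sub-fragmentations issuing from the frozen fragments $X_{\eta,j}$ are independent, with the $j$-th having law $\mathbb{P}_{X_{\eta,j}}$. Let $M^{(j)}$ be the additive martingale of the $j$-th sub-fragmentation and $\Lambda^{(j)}$ its almost sure limit; by the preceding scaling identity, $\mathbb{E}(\Lambda^{(j)} \mid \mathcal{F}_\eta) = X_{\eta,j}^{1+p^*}$.

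The crux is the almost sure identity $\Lambda(p^*) = \sum_j \Lambda^{(j)}$. To establish it, I would fix $t > 0$ and decompose
\[
M(t) = \sum_{j : \tau_j \leq t} M^{(j)}(t - \tau_j) + R(t),
\]
where $\tau_j$ is the ($\mathcal{F}_\eta$-measurable) freezing time of $X_{\eta,j}$ and $R(t)$ collects the $(1+p^*)$-th powers of the fragments at time $t$ that have not yet crossed below $\eta$. The uncrossed fragments have size at least $\eta$, so there are at most $\lfloor 1/\eta \rfloor$ of them at any instant; moreover each line of descent almost surely produces a fragment strictly smaller than $\eta$ in finite time, because the tagged-fragment subordinator $\xi_t = -\log|\Pi_1(t)|$ tends to $+\infty$ a.s.\ (whether proper or killed). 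Consequently $R(t) \to 0$ a.s.\ as $t \to \infty$, while $M^{(j)}(t - \tau_j) \to \Lambda^{(j)}$ a.s.\ for each $j$; non-negativity of the summands then gives $\Lambda(p^*) = \sum_j \Lambda^{(j)}$ in the limit.

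Conditioning on $\mathcal{F}_\eta$ and applying conditional monotone convergence,
\[
\mathbb{E}(\Lambda(p^*) \mid \mathcal{F}_\eta) = \sum_j \mathbb{E}(\Lambda^{(j)} \mid \mathcal{F}_\eta) = \sum_j X_{\eta,j}^{1+p^*} = \langle \rho_\eta, 1 \rangle,
\]
which is the asserted identity. The remaining properties are immediate: since $\mathcal{F}_{\eta'} \supseteq \mathcal{F}_\eta$ whenever $\eta' \leq \eta$ (the monotone structure of the stopping lines), the tower property yields the martingale property along $\{\mathcal{F}_\eta : 1 \geq \eta > 0\}$; uniform integrability is automatic for a family of conditional expectations of a single $L^1$ random variable; and $\mathbb{E}(\langle \rho_\eta, 1 \rangle) = \mathbb{E}(\Lambda(p^*)) = 1$. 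I expect the decomposition step to be the main obstacle, particularly controlling the tail contribution $R(t)$ and justifying the interchange of limit and countable sum in the infinite-activity regime $\nu(\mathcal{S}) = \infty$, where infinitely many dislocations occur in finite time.
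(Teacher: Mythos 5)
You have correctly identified both the right decomposition and its weak point. After establishing $R(t)\to 0$ a.s.\ and $M^{(j)}(t-\tau_j)\to\Lambda^{(j)}$ a.s.\ for each $j$, what you actually have is only the Fatou inequality $\Lambda(p^*)\geq\sum_j\Lambda^{(j)}$; non-negativity of the summands does not by itself license exchanging $\lim_{t\uparrow\infty}$ with the countable sum, and that exchange is exactly where your argument is incomplete (as you yourself flag). The gap can be patched along your route: by Lemma~\ref{many-to-one} with $f\equiv 1$ (which is proved independently of the present lemma, so there is no circularity), $\mathbb{E}\bigl(\sum_j\Lambda^{(j)}\bigr)=\mathbb{E}(\langle\rho_\eta,1\rangle)=1=\mathbb{E}(\Lambda(p^*))$, and a.s.\ domination with equality of means forces $\Lambda(p^*)=\sum_j\Lambda^{(j)}$ a.s., after which the conditional Tonelli step you describe finishes the job.

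The paper avoids the interchange altogether by conditioning \emph{before} sending $t\uparrow\infty$. Applying $\mathbb{E}(\cdot\,|\,\mathcal{F}_\eta)$ to the additive martingale, and using the extended fragmentation property together with its unit mean, yields the $\mathcal{F}_\eta$-measurable identity
\[
\mathbb{E}\left(\left.\sum_{i=1}^\infty X_i^{1+p^*}(t)\right|\mathcal{F}_\eta\right)=\sum_{i\in\mathcal{A}_\eta(t)}X_i^{1+p^*}(t)+\sum_{j\in\mathcal{D}_\eta(t)}X_{\eta,j}^{1+p^*},
\]
which, by exponential decay of the largest fragment, is eventually constant and equal to $\langle\rho_\eta,1\rangle$ once $\mathcal{A}_\eta(t)=\emptyset$, i.e.\ for all $t\geq T$ with $T<\infty$ a.s. Separately, the $L^1(\mathbb{P})$ convergence of the additive martingale to $\Lambda(p^*)$ and the contraction property of conditional expectation force the left-hand side to converge in $L^1$ to $\mathbb{E}(\Lambda(p^*)\,|\,\mathcal{F}_\eta)$. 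Matching the two limits gives the lemma with no infinite-sum interchange. One further small remark: your justification that $R(t)\to 0$ (via the tagged-fragment subordinator tending to infinity) shows each genealogical line eventually crosses $\eta$ but not that they do so uniformly; the paper's appeal to exponential decay of the largest fragment, giving a single a.s.\ finite time $T$ after which $\mathcal{A}_\eta(t)$ is empty, is the clean way to make this rigorous.
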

\begin{proof}
Suppose that $\mathcal{A}_{\eta}(t)$ corresponds to the indices of fragments in $\mathbf{X}(t)$ which are neither fragments in $\mathbf{X}_\eta$ nor descendent  from fragments in the stopping line $\mathbf{X}_\eta$; that is to say, $\mathcal{A}_{\eta}(t)$ corresponds to the indices of fragments in $\mathbf{X}(t)$ which are greater than or equal to $\eta$ in size.  Also write $\mathcal{D}_\eta(t)$ for the indices of fragments in $\mathbf{X}_\eta$ which are either in $\mathbf{X}(t)$ or have descendants in $\mathbf{X}(t)$. Then by the 
extended fragmentation property and the fact that $\mathbb{E}\left(\sum_{i=1}^\infty X^{1+p^*}_i(t)\right)=1$ for all $t\geq 0$, we have
\begin{equation}
 \mathbb{E} \left(\left.\sum_{i=1}^\infty X^{1+p^*}_i(t)\right|\mathcal{F}_\eta\right) = \sum_{i\in \mathcal{A}_{\eta}(t) }X^{1+p^*}_i(t) + \sum_{j\in\mathcal{D}_{\eta}(t) }X^{1+p^*}_{\eta,j}.
\label{t-infty}
\end{equation}
It is known that the largest fragment decays at an exponential rate (cf. Bertoin \cite{bertoinasym}) and hence there exists an almost surely finite time $T$ such that $\mathcal{A}_{\eta}(t)=\emptyset$ (and thus $\mathcal{D}_\eta(t)$ contains all the indices of the stopping line $\mathbf{X}_\eta$) for all $t\geq T$. 
In that case, taking limits as $t\uparrow\infty$ in (\ref{t-infty}) we get 
\begin{equation}
\lim_{t\uparrow\infty}\mathbb{E} \left(\left.\sum_{i=1}^\infty X^{1+p^*}_i(t)\right|\mathcal{F}_\eta\right) 
= \langle\rho_\eta, 1\rangle.
\label{aslimit}
\end{equation}
Note also that, since $\Lambda(p^*)$ is an $L^1(\mathbb{P})$ limit and
\[
\mathbb{E}\left(\left|\mathbb{E} \left(\left.\sum_{i=1}^\infty X^{1+p^*}_i(t)\right|\mathcal{F}_\eta\right) -\mathbb{E} \left(\left.\Lambda(p^*)\right|\mathcal{F}_\eta\right)  \right|\right)
\leq \mathbb{E}\left(\left|\sum_{i=1}^\infty X^{1+p^*}_i(t)-\Lambda(p^*)\right|\right),
\]
the random variable $\mathbb{E} \left(\left.\Lambda(p^*)\right|\mathcal{F}_\eta\right) $ is the $L^1(\mathbb{P})$ limit of $\mathbb{E} \left(\left.\sum_{i=1}^\infty X^{1+p^*}_i(t)\right|\mathcal{F}_\eta\right)$ as $t\uparrow\infty$. Referring back to (\ref{aslimit}) we deduce that in fact
\[
 \langle\rho_\eta, 1\rangle = \mathbb{E} \left(\left.\Lambda(p^*)\right|\mathcal{F}_\eta\right)
\]
which implies the statement of the lemma.
\hfill\end{proof}

In the conservative case the martingale $\langle\rho_\eta, 1\rangle$ is of course trivially identically equal to $1$ for all $1\geq \eta>0$. In the dissipative case, 
 although the limiting variable $\Lambda(p^*)$ is the result of $L^1(\mathbb{P})$ convergence, it is not immediately clear  that $\mathbb{P}(\Lambda(p^*) >0)=1$. However  by conditioning on the state of the fragmentation process at time $t>0$, one easily shows that if $\phi(x)=\mathbb{P}_x(\Lambda(p^*) =0)$ for any $1\geq x>0$, then 
\[
\phi(x) = \mathbb{E}_x\left(\prod_{i=1}^\infty \phi(X_i(t))\right).
\]
Note however that by homogeneity, for all $1\geq x>0$, $\mathbb{P}_x(\Lambda(p^*) =0) = \mathbb{P}(x\Lambda(p^*) =0) = \mathbb{P}(\Lambda(p^*) =0)$. It follows that if $\mathbb{P}(\Lambda(p^*) =0)<1$ then $\phi(x)=0$ for all $1\geq x>0 $. In partcular, $\mathbb{P}(\Lambda(p^*) =0)=0$. The only other possibility is that $\mathbb{P}(\Lambda(p^*) =0)=1$ which contradicts the fact that $\Lambda(p^*)$ is an $L^1(\mathbb{P})$-limit.

Now consider the completely deterministic measure $\rho$ on $[0,1]$ which is absolutely continuous with respect to Lebesgue measure and satisfies 
\[
\rho(du) = \frac{1}{\Phi'(p^*)}\left(\int_{\mathcal S}\mathbf{1}_{\{u>s_n\}} s^{1+p^*}_n \nu(d{\bf s})\right) \frac{du}{u} 
\]
where, in the case that $p^*=0$ we understand $\Phi'(p^*)=\Phi'(0+)$.
For bounded measurable functions $f:[0,1]\rightarrow[0,\infty)$ we write 
\begin{equation}
\langle\rho, f\rangle = \frac{1}{\Phi'(p^*)}\int_0^1 f(u)\left(\int_{\mathcal S}\mathbf{1}_{\{u>s_n\}} s^{1+p^*}_n \nu(d{\bf s})\right) \frac{du}{u}
\label{usemct}
\end{equation}

The measure $\rho$ has a special meaning for the subordinator $\xi$. Let us suppose that $\mathbf{P}$ is taken as the intrinsic law associated with $\xi$. On account of the fact that  $\Phi(p^*)=0$, the quantity $\Phi^{(p^*)}(q):=\Phi(q+p^*)$ for $p\geq 0$ represents the Laplace exponent of $\xi$ under the change of measure 
\[
\left.\frac{d\mathbf{P}^{(p^*)}}{d\mathbf{P}}\right|_{\sigma\{\xi_s: s\leq t\}} = e^{- p^* \xi_t}.
\]
If $m$ is the  jump measure associated with  $(\xi, \mathbb{P})$ and $m^{(p^*)}$ is the jump measure associated with $(\xi, \mathbb{P}^{(p^*)})$, then standard theory tells us that $m^{(p^*)}(dx) = e^{-p^* x}m(dx)$ for $x>0$.  In terms of the dislocation measure $\nu$ we may thus write (cf. p. 142 of Bertoin \cite{bertfragbook}) that 
\begin{equation}
m^{(p^*)}(dx) = e^{-x(1+p^*)}\sum_{i=1}^\infty\nu(-\log s_i \in dx)
\label{Lambda}
\end{equation}
for $x\in(0,\infty)$. It is also known from classical renewal theory (cf. Bertoin \cite{subordinators}) that as $\mathbf{E}^{(p^*)}(\xi_1) = \Phi'(p^*)<\infty$,  
\[
\lim_{x\uparrow\infty} \mathbf{E}^{(p^*)} (\xi_{\tau(x)}-x \in dz) =\frac{1}{\Phi'(p^*)} m^{(p^*)}((z,\infty)) dz
\]
where $\tau(x)  =\inf\{t>0: \xi_t >x\}$. It is therefore straightforward to show, using (\ref{Lambda}) and a change of variables,  that for any bounded measurable $f:[0,1]\rightarrow[0,\infty)$,
\begin{equation}
\lim_{x\uparrow\infty} \mathbf{E}^{(p^*)} f(e^{-(\xi_{\tau(x)}-x)}) =\langle\rho,f\rangle.
\label{sub-remark}
\end{equation}

Our main theorem, below, relates the limiting behaviour of $\rho_\eta$ to $\rho$.

\begin{theorem}[Strong Law of Large Numbers]\label{main}
For any homogenous fragmentation process 
we have
\[
\lim_{\eta_{\downarrow 0}}  \frac{\langle\rho_\eta, f\rangle  }{\langle\rho, f\rangle }
= \Lambda(p^*) \qquad \mathbb{P}\mbox{-a.s.}
\]
for all   bounded measurable functions $f:[0,1]\rightarrow [0,\infty)$.
\end{theorem}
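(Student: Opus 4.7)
The plan is to combine the total-mass martingale convergence from Lemma~\ref{firstlemma} with a tagged-fragment computation of $\mathbb{E}\langle\rho_\eta, f\rangle$ and an $L^{p_0}$ fluctuation bound drawn from (A3). Set $M_\eta(f) := \langle\rho_\eta, f\rangle$. By a standard monotone class argument, using $M_\eta(1)$ to dominate, it is enough to verify the statement on a countable measure-determining family of bounded continuous functions $f$; fix such an $f$.

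For $0 < \eta \leq \delta < 1$, decompose $M_\eta(f) = \mathbb{E}[M_\eta(f) \mid \mathcal{F}_\delta] + D_\eta^\delta$. The extended fragmentation property together with the homogeneous-scaling identity ($M_\eta(f)$ under $\mathbb{P}_r$ is distributed as $r^{1+p^*} M_{\eta/r}(f)$ under $\mathbb{P}$) yields
\[
\mathbb{E}\bigl[M_\eta(f) \mid \mathcal{F}_\delta\bigr] \;=\; \sum_j X_{\delta,j}^{1+p^*}\, \mathbb{E}\bigl[M_{\eta/X_{\delta,j}}(f)\bigr].
\]
A size-biased (tagged-fragment) change of measure, using that $\xi = -\log|\Pi_1|$ under the $p^*$-tilt has Laplace exponent $\Phi^{(p^*)}$, gives
\[
\mathbb{E}\bigl[M_\eta(f)\bigr] \;=\; \mathbf{E}^{(p^*)}\!\bigl[f\bigl(e^{-(\xi_{\tau(-\log\eta)} + \log\eta)}\bigr)\bigr] \;\xrightarrow[\eta\downarrow 0]{}\; \langle\rho, f\rangle
\]
by the overshoot asymptotic (\ref{sub-remark}). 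Dominated convergence applied to the series (with summable majorant $X_{\delta,j}^{1+p^*}\|f\|_\infty$) then gives $\mathbb{E}[M_\eta(f) \mid \mathcal{F}_\delta] \to \langle\rho, f\rangle \cdot M_\delta(1)$ almost surely as $\eta\downarrow 0$, and Lemma~\ref{firstlemma} turns this into $\Lambda(p^*)\langle\rho, f\rangle$ as $\delta\downarrow 0$.

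For the remainder, conditional independence of sub-fragmentations given $\mathcal{F}_\delta$ gives
\[
D_\eta^\delta \;=\; \sum_j X_{\delta,j}^{1+p^*}\bigl(M^{(j)}_{\eta/X_{\delta,j}}(f) - \mathbb{E}\bigl[M^{(j)}_{\eta/X_{\delta,j}}(f)\bigr]\bigr),
\]
a sum of centred, conditionally independent random variables. Assumption (A3), combined with a Bertoin-Rouault-type estimate, ensures $\Lambda(p^*) \in L^{p_0}(\mathbb{P})$, so Jensen's inequality yields $\sup_\eta\|M_\eta(1)\|_{p_0} < \infty$ and hence $\sup_\eta\|M_\eta(f)\|_{p_0} < \infty$. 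The von Bahr--Esseen inequality (valid since $p_0 \in (1,2]$) then delivers
\[
\mathbb{E}\bigl[|D_\eta^\delta|^{p_0} \mid \mathcal{F}_\delta\bigr] \;\leq\; C_f \sum_j X_{\delta,j}^{p_0(1+p^*)} \;\leq\; C_f\, \delta^{(p_0-1)(1+p^*)}\, M_\delta(1),
\]
which tends to $0$ in $L^1$ as $\delta\downarrow 0$ since $1+p^* > 0$ and $p_0 > 1$.

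Along the geometric subsequence $\delta_n = 2^{-n}$, the summability of $\delta_n^{(p_0-1)(1+p^*)}$ permits a Markov--Borel--Cantelli argument to conclude $D_{\delta_n}^{\delta_{n-1}} \to 0$ almost surely, delivering the SLLN along $\eta = \delta_n$. The main obstacle, and the final step, is to upgrade this subsequential convergence to the continuous limit $\eta\downarrow 0$: for $\eta \in (\delta_{n+1}, \delta_n)$ one must bound the oscillation $|M_\eta(f) - M_{\delta_n}(f)|$. My approach would exploit the nested structure of the stopping-line family $\{\mathbf{X}_\eta\}$ to sandwich $M_\eta(f)$ between expressions built from $M_{\delta_n}$ and the further sub-fragmentations down to level $\delta_{n+1}$, then apply a further uniform-in-$\eta$ $L^{p_0}$ estimate together with Borel--Cantelli to close the argument.
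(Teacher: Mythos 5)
Your overall strategy has the right ingredients (conditional decomposition against a coarser stopping line, many-to-one to compute the mean, von Bahr--Esseen plus Borel--Cantelli for the fluctuations), and it is essentially the paper's strategy. However, there is a genuine gap in the choice of scales that kills the argument as written. You pair $M_{\delta_n}(f)$ with $\mathcal{F}_{\delta_{n-1}}$ where $\delta_n=2^{-n}$, so that the ratio of inner to outer scale is $\delta_n/\delta_{n-1}=1/2$, a constant. The fluctuation piece $D_{\delta_n}^{\delta_{n-1}}\to 0$ is fine by your Borel--Cantelli estimate, but the bias piece does \emph{not} converge to $\langle\rho,f\rangle\Lambda(p^*)$. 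Indeed,
\[
\mathbb{E}\bigl[M_{\delta_n}(f)\mid\mathcal{F}_{\delta_{n-1}}\bigr]
\;=\;\sum_j X_{\delta_{n-1},j}^{1+p^*}\,\mathbb{E}\bigl[M_{\delta_n/X_{\delta_{n-1},j}}(f)\bigr],
\]
and each argument $\delta_n/X_{\delta_{n-1},j}\ge\delta_n/\delta_{n-1}=1/2$, so the overshoot asymptotic $\mathbb{E}[M_{\eta'}(f)]\to\langle\rho,f\rangle$ as $\eta'\downarrow 0$ never applies. Your nested-limit computation ($\eta\downarrow 0$ with $\delta$ fixed, then $\delta\downarrow 0$) is correct as stated, but it is a \emph{different} double limit from the diagonal one needed for the Borel--Cantelli combination, and you cannot interchange them for free. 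The paper resolves exactly this by comparing $\rho_{\eta^s}$ with $\mathcal{F}_\eta$ (i.e.\ outer scale $e^{-n\delta}$, inner scale $e^{-mn\delta}$, $m\ge 2$), so the ratio of scales, $\eta^{s-1}$, tends to $0$ simultaneously with both scales. Making that diagonal limit work is nontrivial and is the content of the paper's Lemma~\ref{a.s.approx}, which in turn hinges on Lemma~\ref{little-bits}: one must show that fragments in $\mathbf{X}_\eta$ that happen to be much smaller than $\eta$ (smaller than $\eta^s$) contribute negligibly, and this requires $s$ sufficiently large and a separate argument (exponential decay of the largest fragment, or in the conservative case the strong law for the subordinator overshoot). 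Your proposal does not touch this point, which is really the heart of the a.s.\ argument. Finally, you flag the lattice-to-continuous upgrade as unfinished; the paper closes it with a sandwich argument for continuous compactly supported $f$ using uniform continuity, then extends to bounded measurable $f$ via monotone approximation from below and the complementary bound with $1-f$. That step is routine once the scale-separation issue is fixed, but as things stand the fixed-ratio subsequence is a fatal flaw.
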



The above result can also be rephrased in a slightly different way which is more in line with classical results of this type for spatial branching processes. As a consequence of the forthcoming Lemma \ref{many-to-one} it can easily be shown that 
\[
\mathbb{E}(\langle\rho_\eta,f\rangle) = \mathbf{E}^{(p^*)}[f(e^{-(\xi_{\tau(-\log \eta)}  +\log\eta  )})],
\]
in which case, taking note of (\ref{sub-remark}), the statement of the theorem may also be read as
\[
\lim_{\eta\downarrow0}\frac{\langle\rho_\eta,f\rangle}{\mathbb{E}(\langle\rho_\eta,f\rangle) }=\Lambda(p^*) 
\]
$\mathbb{P}$-almost surely.

Theorem \ref{main} extends Corollary 2 of Bertoin and Martinez \cite{energy}  
where $L^2(\mathbb{P})$ convergence had been established in the conservative case. See also Proposition 1.12 of \cite{bertfragbook}. It  also lends itself to classical strong laws of large numbers that were proved by Nerman \cite{nerman} in the setting of C-M-J processes, Asmussen and Herring \cite{AH76, AH77}  in the context of spatial branching processes  and, more recently, Engl\"ander, Harris and Kyprianou \cite{EHK}, Chen and Shiozawa \cite{CS} and Chen, Ren and Wang \cite{CRW} for branching diffusions and Engl\"ander \cite{E} and Engl\"ander and Winter \cite{EW} for superdiffusions.

Theorem \ref{main} also gives rise to a strong law of large numbers for self-similar fragmentation processes. 
Recall that any self-similar fragmentation process with a given dislocation measure  $\nu$ and index $\alpha\in\mathbb{R}$ may be obtained from the associated homogenous process via the use of stopping lines. Indeed it is known (cf. Bertoin \cite{bertoinselfsimilar}) that, in the $\mathcal{P}$-valued representation of a given homogenous fragmentation process,  if one defines the stopping times 
\[
\theta_i(t) =\inf\left\{ u\geq 0 : \int_0^u |\Pi_i(s)|^{-\alpha} ds > t\right\}
\]
for each $i\in\mathbb{N}$, then stopping the block in $\Pi$ which contains the integer $i$ at $\theta_i(t)$ for each $i\in\mathbb{N}$ produces a stopping line of fragments whose asymptotic frequencies exist. We    denote the associated mass fragments of the latter by $\mathbf{X}^{(\alpha)}(t)$. As a process in time, $\mathbf{X}^{(\alpha)}:= \{\mathbf{X}^{(\alpha)}(t) : t\geq 0 \}$ is defined to be the $\alpha$-self-similar fragmentation process associated to the dislocation measure $\nu$. 
Now returning to the definition of $\mathbf{X}_\eta$, one notes that, as a stopping line concerned with fragment sizes, it is blind to any time-changes made along individual nested sequences of fragments. Therefore, if one considers in the process $\mathbf{X}^{(\alpha)}(t)$ the  first fragments in their line of decent to be smaller than $\eta$ in size, then one obtains precisely $\mathbf{X}_\eta$ again. We thus obtain the following corollary to Theorem \ref{main}.


\begin{corollary} Theorem \ref{main} is valid for any self-similar fragmentation process.
\end{corollary}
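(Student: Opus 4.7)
The corollary should follow almost immediately from the observation, already implicit in the paragraph preceding the statement, that the stopping line $\mathbf{X}_\eta$ is a purely genealogical object, blind to the time parametrisation of the fragmentation process. I would build on this in three short steps.

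First, I would make explicit that the time-change construction of $\mathbf{X}^{(\alpha)}$ from the $\mathcal{P}$-valued homogeneous process $\Pi$ via the clocks $\theta_i(t)$ preserves both the genealogical tree of fragments and the masses attached to its vertices: it only re-orders \emph{when}, in real time, each dislocation event happens, relative to the homogeneous case. Hence the characterisation of $\mathbf{X}_\eta$ as the (unordered) family of first fragments in their line of descent to become strictly smaller than $\eta$ produces the same random sequence of masses under $\mathbf{X}$ and under $\mathbf{X}^{(\alpha)}$.

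Second, I would note that every quantity appearing in the statement of Theorem \ref{main} is a functional of the family of stopping lines $\{\mathbf{X}_\eta : 1\geq\eta>0\}$ alone. Indeed, the numerator $\langle\rho_\eta,f\rangle=\sum_j X_{\eta,j}^{1+p^*} f(X_{\eta,j}/\eta)$ manifestly depends only on $\mathbf{X}_\eta$; the denominator $\langle\rho,f\rangle$ is deterministic and involves only $\nu$ and $p^*$, which are shared by $\mathbf{X}$ and $\mathbf{X}^{(\alpha)}$; and the limit $\Lambda(p^*) = \lim_{\eta\downarrow 0}\langle\rho_\eta,1\rangle$ obtained in Lemma \ref{firstlemma} is likewise a functional of the stopping lines. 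Combining this with the first step, the almost sure identity asserted in Theorem \ref{main} transfers verbatim from $\mathbf{X}$ to $\mathbf{X}^{(\alpha)}$.

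I do not anticipate any real obstacle here; the only mild technical point worth verifying is that, for every $1\geq\eta>0$, the stopping line $\mathbf{X}_\eta$ is reached in an almost surely finite self-similar time, so that the construction of $\mathbf{X}^{(\alpha)}$ up to that line is well defined and the identification of its genealogy with that of $\mathbf{X}$ is legitimate. This is immediate from the proof of Lemma \ref{firstlemma}, where it is observed that $\mathbf{X}_\eta$ is reached by the homogeneous process at an almost surely finite time $T$; since each time-change $\theta_i$ is finite on intervals during which the associated block retains positive mass, the corresponding self-similar hitting time of $\mathbf{X}_\eta$ is also finite. Once this remark is in place, the corollary reduces to a tautology.
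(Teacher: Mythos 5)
Your proposal is correct and follows essentially the same route as the paper, which likewise observes that $\mathbf{X}_\eta$ is a purely genealogical object, blind to the Lamperti-type time-changes $\theta_i$ used to construct $\mathbf{X}^{(\alpha)}$ from the homogeneous process, so that the family $\{\mathbf{X}_\eta : 1\geq\eta>0\}$ and hence every quantity in Theorem \ref{main} is identical for both processes. Your added remark about finiteness of the self-similar hitting time is harmless but not needed: the identification of the stopping lines is by construction and does not rely on it.
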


We conclude this section by noting that the remainder of the paper is set out into two further sections. Some initial preliminary results followed by the proof of the main result.

\section{Preliminary results}

In this section we produce a number of initial results which will be collectively used in the proof of Theorem \ref{main}. 
A key element of much of the reasoning is the now very popular {\it  spine decomposition} which we briefly recall here; for a fuller account the reader should consult Bertoin and Rouault \cite{bertoinrouault1, bertoinrouault1-arxiv} or Bertoin \cite{bertfragbook} however. 

The process
 \[
\Lambda_t( p):= \sum_{i=1}^\infty X_i^{1+p}(t)e^{\Phi(p)t}, \qquad t\geq 0
 \]
 is a martingale for all $p\in(\underline{p}, \infty)$  which (thanks to positivity) converges almost surely to its limit which we denote by $\Lambda(p)$.  When $p=p^*$ it is the same martingale discussed in the previous section.  For each $p\in(\underline{p},\infty)$ define the measure $\mathbb{P}^{(p)}$ via
\begin{equation}
\left.\frac{d\mathbb{P}^{(p)}}{d\mathbb{P}}\right|_{\sigma\{\mathbf{X}(s): s\leq t\}} = \sum_{i=1}^\infty X^{1+p}_i(t) e^{\Phi(p)t}.
\label{COM}
\end{equation}

Following the classical analysis of Lyons \cite{lyons}, Bertoin and Rouault \cite{bertoinrouault1, bertoinrouault1-arxiv} show that the process $(\mathbf{X}, \mathbb{P}^{(p)})$ is equal in law to the ranked asymptotic frequencies of an $\mathcal{P}$-valued fragmentation process with a distinguished nested sequence of fragments known as a {\it spine}. The evolution of the latter is the same as the process $\Pi$ except for a modification to the way in which the block containing the integer 1, $\Pi_1(t)$, evolves for $t\geq 0$. This is done by working with a Poisson point process on $\mathcal{P}\times\mathbb{N}$ with intensity adjusted to be equal to 
\[
(\mu^{(p)}\otimes\sharp)|_{\mathcal{P}\times \{1\}}+(\mu\otimes\sharp)|_{\mathcal{P}\times \{2,3,\cdots\}} \mbox{ where  }\mu^{(p)}(d\pi) = |\pi_1|^{p}\mu(d\pi).
\]
An important consequence of this change of measure is that  $\{ -\log |\Pi_1(t)|: t \geq 0\}$, has the characteristics of an exponentially tilted subordinator.
Indeed note that with the help of a telescopic sum, an application of  the compensation formula for Poisson point processes and (\ref{mutonu}), we have for $\lambda, t\geq 0$
\begin{eqnarray*}
\mathbb{E}^{(p)} (|\Pi_1(t)|^{\lambda}) &=& \mathbb{E}^{(p)}\left(\sum_{s\leq t}(1- |\pi_1(s)|^\lambda) |\Pi_1(s-)|^{\lambda}\right)\\
&=&\int_0^t \mathbb{E}^{(p)}(|\Pi_1(s)|^{\lambda})ds \int_{\mathcal{P}} (1- |\pi_1|^\lambda)\mu^{(p)}(d\pi)\\
&=&\int_0^t \mathbb{E}^{(p)}(|\Pi_1(s)|^{\lambda})ds \int_{\mathcal{S}} \sum_{i=1}^\infty s^{1+p}_i(1- s_i^\lambda)\nu(d{\bf s})\\
&=&\Phi^{(p)}(\lambda)\int_0^t \mathbb{E}^{(p)}(|\Pi_1(s)|^{\lambda})ds
\end{eqnarray*}
where $\Phi^{(p)}(\lambda) = \Phi(\lambda + p) - \Phi(p)$.
Solving the above integral equation for $\mathbb{E}^{(p)} (|\Pi_1(t)|^{\lambda})$ shows that the intrinsic law of $-\log |\Pi_1(\cdot)|$
 is precisely that of  $(\xi,\mathbf{P}^{(p)})$ where
\[
\left.\frac{d\mathbf{P}^{(p)}}{d\mathbf{P}}\right|_{\sigma\{\xi_s : s\leq t\}} = e^{- p\xi_t +\Phi(p)t}.
\]

We also mention that the natural analogue of Biggins' martingale convergence theorem proved in Theorem 1 of \cite{bertoinrouault1-arxiv} states that there is $L^1(\mathbb{P})$ convergence of the martingale to $\Lambda(p)$ if and only if $p\in(\underline{p}, \overline{p})$ where $\overline{p}$ is the unique solution to the equation $(1+p)\Phi'(p) = \Phi(p)$ and otherwise $\Lambda(p)=0$ almost surely. Note in particular that, since $p\geq \overline{p}$ if and only $(1+p)\Phi'(p)\leq \Phi(p)$, it follows that $p^*< \overline{p}$.

Our first preparatory lemma  is sometimes referred to as a `many-to-one' identity (see for example \cite{harris1, harris2}) and has appeared in many guises throughout the study of spatial branching processes as one sees, for example, in early work such as  Bingham and Doney \cite{BD75} on Crump-Mode-Jagers processes.

\begin{lemma}\label{many-to-one}
For all measurable $f:[0,1]\rightarrow [0,\infty)$ we have 
\[
\mathbb{E}\left(\sum_j X^{1+p^*}_{\eta, j} f(X_{\eta, j})\right) = \mathbf{E}^{(p^*)}(f(e^{-\xi_{\tau(-\log \eta)}})).
\]
\end{lemma}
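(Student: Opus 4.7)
The plan is to invoke the change of measure $\mathbb{P}^{(p^*)}$ introduced in (\ref{COM}), applied at the stopping line $\mathbf{X}_\eta$. Since $\Phi(p^*)=0$, the martingale density reduces to $\Lambda_t(p^*)=\sum_i X_i^{1+p^*}(t)$, which is uniformly integrable because assumption (A3) guarantees $p^*<\overline{p}$. Combined with Lemma \ref{firstlemma} and optional sampling applied at the stopping line $\mathbf{X}_\eta$, this yields
\[
\left.\frac{d\mathbb{P}^{(p^*)}}{d\mathbb{P}}\right|_{\mathcal{F}_\eta}=\langle\rho_\eta,1\rangle=\sum_j X_{\eta,j}^{1+p^*}.
\]

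Next, given $\mathcal{F}_\eta$, let $J_*$ denote a random index chosen from $\{1,2,\ldots\}$ with conditional probabilities $\mathbb{P}(J_*=j\,|\,\mathcal{F}_\eta)=X_{\eta,j}^{1+p^*}/\langle\rho_\eta,1\rangle$. A direct size-biasing computation gives
\[
\mathbb{E}\left[\sum_j X_{\eta,j}^{1+p^*}f(X_{\eta,j})\right]=\mathbb{E}\bigl[\langle\rho_\eta,1\rangle\,\mathbb{E}[f(X_{\eta,J_*})\,|\,\mathcal{F}_\eta]\bigr]=\mathbb{E}^{(p^*)}[f(X_{\eta,J_*})]
\]
by the Radon-Nikodym identity just displayed. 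The spine decomposition recalled above the lemma tells us that this size-biased pick $J_*$ coincides, under $\mathbb{P}^{(p^*)}$, with the index of the fragment in $\mathbf{X}_\eta$ through which the spine passes; this is the stopping-line analogue of the usual size-biased interpretation of the spine at fixed times.

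Finally, under $\mathbb{P}^{(p^*)}$ the spine's size $|\Pi_1(t)|$ evolves as $e^{-\xi_t}$ with $\xi$ having law $\mathbf{P}^{(p^*)}$. The spine first enters the stopping line $\mathbf{X}_\eta$ at the instant $\inf\{t:|\Pi_1(t)|<\eta\}=\inf\{t:\xi_t>-\log\eta\}=\tau(-\log\eta)$, at which moment $X_{\eta,J_*}=|\Pi_1(\tau(-\log\eta))|=e^{-\xi_{\tau(-\log\eta)}}$. Hence $\mathbb{E}^{(p^*)}[f(X_{\eta,J_*})]=\mathbf{E}^{(p^*)}[f(e^{-\xi_{\tau(-\log\eta)}})]$, which is the right-hand side of the identity.

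The principal technical hurdle is the transfer of the change of measure from fixed times to the stopping line $\mathcal{F}_\eta$, and, in parallel, the correct identification of the spine's fragment within $\mathbf{X}_\eta$ as a size-biased pick. Both steps rely on the extended fragmentation property (Theorem 3.14 of \cite{bertfragbook}) applied to the distinguished nested sequence of fragments forming the spine, together with the uniform integrability of $\Lambda_\cdot(p^*)$; once these are in place, the rest of the argument is routine bookkeeping.
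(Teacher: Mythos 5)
Your proof is correct in substance but follows a genuinely different route from the paper's. The paper works directly with the interval representation $\Theta$ of the fragmentation: writing $\sum_j X_{\eta,j}^{1+p^*} f(X_{\eta,j}) = \int_0^1 |I_u(T_\eta(u))|^{p^*} f(|I_u(T_\eta(u))|)\,du$, it then appeals to the fact that for an independent uniform $U$, the process $-\log|I_U(\cdot)|$ under $\mathbb{P}$ is a subordinator with law $\mathbf{P}$; the $(p^*)$-tilting is then done entirely at the level of $\xi$, using $d\mathbf{P}^{(p^*)}/d\mathbf{P} = e^{-p^*\xi_t}$ (since $\Phi(p^*)=0$). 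In contrast, you first tilt the whole fragmentation measure from $\mathbb{P}$ to $\mathbb{P}^{(p^*)}$, invoke Lemma \ref{firstlemma} and the uniform integrability of $\Lambda_\cdot(p^*)$ to identify the Radon--Nikodym density on $\mathcal{F}_\eta$ as $\langle\rho_\eta,1\rangle$, and then identify the size-biased pick $J_*$ with the spine fragment at the stopping line. The two approaches are morally dual: the paper's uniform tag $U$ is the $\mathbb{P}$-side picture and, after tilting, becomes the $(1+p^*)$-size-biased pick that you use on the $\mathbb{P}^{(p^*)}$-side. The paper's route is slightly more elementary since it only needs the tagged-fragment-is-a-subordinator fact, while yours needs the spine construction together with the (nontrivial, though standard) assertion that the spine is a size-biased pick \emph{at a stopping line}, not just at fixed times --- you flag this, and it does need an argument via the extended fragmentation property that the paper never has to supply. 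One minor inaccuracy: the uniform integrability of $\Lambda_\cdot(p^*)$ does not come from (A3); it comes from $p^* < \overline{p}$, which the paper derives directly from $\Phi(p^*)=0$ and the characterisation of $\overline{p}$. Assumption (A3) is only used later, for the $L^{p_0}$ estimates in Proposition \ref{Lp}.
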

\begin{proof}
It will be convenient to introduce a third representation of  fragmentation processes. Denote by $\Theta = \{\Theta(t) : t\geq 0\}$ the interval representation of the fragmentation process (see \cite{bad}). That is to say $\Theta$ is a sequence of nested random open subesets of $(0,1)$ in the sense that $\Theta(t+s)$ is a refinement of $\Theta(t)$ for $s,t\geq 0$. The process ${\bf X}(t)$ can be recovered from $\Theta(t)$ by ranking the  sequence of the lengths of the intervals of which $\Theta(t)$ is comprised.

Next define for each $u\in(0,1)$
\[
T_\eta(u)=\inf\{t\geq 0 : |I_u(t)|<\eta\},
\]
where $|I_u(t)|$ denotes the length of the component of $\Theta(t)$ containing $u$. We may now write
\begin{eqnarray*}
\mathbb{E}\left(\sum_j X^{1+p^*}_{\eta, j} f(X_{\eta, j})\right) &=& \mathbb{E}\int_0^1 
|I_u(T_\eta(u))|^{p^*}
f(I_u(T_\eta(u)))du
\\
& = &\mathbb{E}\left[ 
|I_U(T_\eta(U))|^{p^*}
f(I_U(T_\eta(U)))\right],
\end{eqnarray*}
where $U$ is an independent and uniformly distributed random variable. It is known however (cf. Bertoin and Martinez \cite{energy}) that the process $\{-\log |I_U(t)|:t\geq 0\}$ under $\mathbb{P}$  has the same law as the subordinator $\xi$ under $\mathbf{P}$ and hence we have 
\begin{eqnarray*}
\mathbb{E}\left(\sum_j X^{1+p^*}_{\eta, j} f(X_{\eta, j})\right) &=&
\mathbf{E}\left[ e^{-p^*\xi_{\tau(-\log \eta)}}f(e^{-\xi_{\tau(-\log \eta)}})\right]\\
&=&
\mathbf{E}^{(p^*)}\left[ f(e^{-\xi_{\tau(-\log \eta)}})\right],
\end{eqnarray*}
thus completing the proof.
\hfill
\end{proof}

Next we  define for $1\geq\eta>0$ the  families of subindices of the stopping line $\mathbf{X}_\eta$
\[
\mathcal{J}_{\eta,s} =\{j=1,2,\cdots : X_{\eta, j} \geq \eta^s\} \mbox{ and }\mathcal{J}^{\rm c}_{\eta,s} =\{j=1,2,\cdots : X_{\eta, j} <\eta^s\}
\]
Note in particular that $\mathcal{J}_{\eta, 1}^{\rm c}$ is the full set of indices of fragments in $\mathbf{X}_\eta$.

\begin{lemma}\label{little-bits} There exists an $s_0>1$ such that for all $s\geq s_0$
\[
\lim_{\eta\downarrow 0}\sum_{j\in\mathcal{J}_{\eta,s}} X^{1+p^*}_{\eta, j} = \Lambda(p^*) 
\]
$\mathbb{P}$-almost surely.
\end{lemma}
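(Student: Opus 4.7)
By Lemma~\ref{firstlemma} one has $\sum_j X_{\eta,j}^{1+p^*}=\langle\rho_\eta,1\rangle\to\Lambda(p^*)$ $\mathbb{P}$-almost surely, so it suffices to show that the small-fragments residual
\[
D_{\eta,s}:=\sum_{j\in\mathcal{J}_{\eta,s}^{\rm c}} X_{\eta,j}^{1+p^*}
\]
tends to $0$ $\mathbb{P}$-a.s.\ when $s$ is taken sufficiently large. Applying Lemma~\ref{many-to-one} with the test function $f(u)=\mathbf{1}_{\{u<\eta^s\}}$ immediately yields
\[
\mathbb{E}(D_{\eta,s})=\mathbf{P}^{(p^*)}\bigl(\xi_{\tau(x)}>sx\bigr),\qquad x:=-\log\eta,
\]
so the problem reduces to controlling the first-passage overshoot of the tilted subordinator.

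\textbf{Chernoff estimate and Borel--Cantelli along a subsequence.} Under $\mathbf{P}^{(p^*)}$ the subordinator $\xi$ has finite mean $\Phi'(p^*)$, and since $p^*>\underline{p}$ positive exponential moments are available: $\mathbf{E}^{(p^*)}(e^{q\xi_t})=e^{-t\Phi(p^*-q)}$ with $\Phi(p^*-q)<0$ for all small $q>0$. Splitting
\[
\mathbf{P}^{(p^*)}(\xi_{\tau(x)}>sx)\leq \mathbf{P}^{(p^*)}(\tau(x)>Cx)+\mathbf{P}^{(p^*)}(\xi_{Cx}>sx)
\]
with $C\in(1/\Phi'(p^*),\,s/\Phi'(p^*))$ and applying Chernoff to each term produces constants $K,\beta(s)>0$ with $\mathbf{P}^{(p^*)}(\xi_{\tau(x)}>sx)\leq Ke^{-\beta(s)x}$ for every $s>1$. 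Choosing $\eta_n=\theta^n$ for a fixed $\theta\in(0,1)$ then makes $\mathbb{E}(D_{\eta_n,s})\leq K\theta^{n\beta(s)}$ summable in $n$; Markov's inequality together with the Borel--Cantelli lemma gives $D_{\eta_n,s}\to 0$ $\mathbb{P}$-a.s.

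\textbf{Interpolation to the continuous limit.} To pass from the geometric subsequence to the full family $\eta\downarrow 0$, fix $\eta\in[\eta_{n+1},\eta_n]$ and classify each fragment of $\mathbf{X}_\eta$ according to its ancestor $X_{\eta_n,k}$ in the coarser stopping line $\mathbf{X}_{\eta_n}$. Since $\eta^s\leq\eta_n^s$, the \emph{inherited} fragments (those with $X_{\eta_n,k}<\eta$, so that $X_{\eta,j}=X_{\eta_n,k}$) contribute at most $D_{\eta_n,s}$ to $D_{\eta,s}$. The \emph{fresh} fragments stem from ancestors $X_{\eta_n,k}\in[\eta,\eta_n)$ which must still fragment below $\eta$; by the extended fragmentation property these sub-fragmentations are independent copies of $\mathbf{X}$ conditionally on $\mathcal{F}_{\eta_n}$, and the scaling sends the pair (threshold, bad-threshold) to $(\eta/X_{\eta_n,k},\,(\eta/X_{\eta_n,k})^{s'})$ with an effective exponent $s'=1+(s-1)|\log\eta|/\log(X_{\eta_n,k}/\eta)$ which is \emph{large} for $n$ large. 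The Chernoff bound of the previous step, applied to each sub-fragmentation, yields a very fast conditional decay of its expected fresh contribution, and a summable bound together with a second Borel--Cantelli argument delivers $\sup_{\eta\in[\eta_{n+1},\eta_n]}D_{\eta,s}\to 0$ $\mathbb{P}$-a.s., completing the proof for $s\geq s_0$.

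\textbf{Main obstacle.} The hardest step is the interpolation from $\{\eta_n\}$ to the full family: although $\mathbb{E}(D_{\eta,s})$ decays exponentially in $-\log\eta$, the map $\eta\mapsto D_{\eta,s}$ is neither monotone nor right-continuous in a directly exploitable way. The decomposition into inherited and fresh contributions, and the need for the fresh exponent $s'$ to grow with $n$ so that the corresponding expected mass is summable uniformly over $[\eta_{n+1},\eta_n]$, is precisely what forces the restriction $s\geq s_0$ for some $s_0>1$ large enough.
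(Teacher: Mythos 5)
Your strategy is genuinely different from the paper's: you control the first moment of the residual $D_{\eta,s}=\sum_{j\in\mathcal{J}^{\rm c}_{\eta,s}}X_{\eta,j}^{1+p^*}$ via the many-to-one identity and a Chernoff bound on the overshoot, then invoke Borel--Cantelli along a geometric subsequence and interpolate. The paper instead argues pathwise for all $\eta$ at once: in the conservative case it writes $\sum_{j\in\mathcal{J}^{\rm c}_{\eta,s}}X_{\eta,j}=\int_0^1\mathbf{1}_{\{|I_u(T_\eta(u))|<\eta^s\}}\,du$ in the interval representation and applies dominated convergence, using only that the relative overshoot $(\xi_{\tau(x)}-x)/x\to 0$ a.s.\ for a finite-mean subordinator; in the dissipative case it bounds $D_{\eta,s}$ by $\eta^{s(p^*-p)}e^{-\Phi(p)\sigma_\eta}$ times the uniformly integrable martingale $\sum_j X_{\eta,j}^{1+p}e^{\Phi(p)\sigma_{\eta,j}}$ for some $p\in(\underline{p},p^*)$, and uses the a.s.\ speed $-\sigma_\eta/\log\eta\to 1/\Phi'(\overline{p})$ of the largest fragment. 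Neither subsequences nor interpolation are needed there.

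Your argument has a genuine gap in the conservative case. Your Chernoff step requires $\mathbf{E}^{(p^*)}(e^{q\xi_t})<\infty$ for some $q>0$, i.e.\ $p^*-q>\underline{p}$; this is available in the dissipative case by (A2), but when $\nu$ is conservative we have $p^*=0$ and the paper allows $\underline{p}=0$, assuming only the finite-mean condition (A1). In that case $\xi$ has no positive exponential moments under $\mathbf{P}^{(p^*)}=\mathbf{P}$, and $\mathbf{P}(\xi_{\tau(x)}>sx)$ is merely $o(1)$ --- its decay is governed by the tail of the integrated L\'evy tail and can be arbitrarily slow --- so $\sum_n\mathbf{P}(\xi_{\tau(x_n)}>sx_n)$ along $x_n=n|\log\theta|$ may diverge and Borel--Cantelli gives nothing. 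This is exactly why the paper treats the conservative case by a separate dominated-convergence argument that needs only the a.s.\ statement $(\xi_{\tau(x)}-x)/x\to0$. Separately, your interpolation step is not actually a proof as written: you cannot "sum a summable bound" over the continuum $\eta\in[\eta_{n+1},\eta_n]$, and the fresh/inherited decomposition with the effective exponent $s'$ is only sketched. It is, however, fixable and in fact simpler than you suggest: for $n$ large one has $\eta^s\leq\eta_n^s<\eta_{n+1}$, so any fragment of $\mathbf{X}_\eta$ of size $<\eta^s$ has all its ancestors of size $\geq\eta\geq\eta_{n+1}$ and is therefore already a fragment of $\mathbf{X}_{\eta_{n+1}}$; hence $D_{\eta,s}\leq D_{\eta_{n+1},\,sn/(n+1)}$ and the sup over the interval reduces to the subsequence case with a marginally smaller exponent. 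With that repair, and restricted to the cases where $p^*>\underline{p}$, your route does go through; but as stated it does not cover all homogeneous fragmentations satisfying (A1)--(A3).
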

\begin{proof}
We deal with the conservative and dissipative cases separately.  Consider first the conservative case, $p^*=0$. In the terminology of the proof of the previous Lemma we may easily write
\begin{equation}
\sum_{j\in\mathcal{J}^{\rm c}_{\eta,s}} X_{\eta, j} = \int_0^1  \mathbf{1}_{\{ |I_u(T_\eta(u))| <\eta^s\}}du.
\label{useDCT}
\end{equation}

Let us temporarily write  $ \xi_u(t)=-\log |I_u(t)|  $, $x=-\log \eta$ and let $\tau_u(x)  =\inf\{t>0: \xi_u(t)>x\} $.  Note that the event $\{ |I_u(T_\eta(u))| < \eta^s\}$ is equivalent to the event 
\[
\left\{
\frac{\xi_u(\tau_u(x)) -x}{x} >(s-1)
 \right\}.
 \]
 Since $\xi_u$ is a subordinator with Laplace exponent $\Phi$, and thus has finite mean by assumption (A1), it follows from the classical theory of subordinators (cf. Bertoin \cite{subordinators}) that 
 \[
 \lim_{x\uparrow\infty} \frac{\xi_u(\tau_u(x)) -x}{x} =0 
 \]
 almost surely.
 The result now follows by an application of the Dominated Convergence Theorem in (\ref{useDCT}).
 
 Next consider the dissipative case so that $p^*<0$. In that case with $p\in(\underline{p}, p^*)$ we may develop (\ref{useDCT}) as follows,
 \begin{eqnarray}
 \sum_{
 j\in\mathcal{J}^{\rm c}_{\eta,s}
 } X^{1+p^*}_{\eta, j} &\leq&
\eta^{s(p^*-p)}   \sum_{j\in\mathcal{J}^{\rm c}_{\eta,s}} X^{1+p}_{\eta, j} e^{\Phi(p)\sigma_{\eta,j} -\Phi(p)\sigma_{\eta,j}}\nonumber\\
&\leq &\eta^{s(p^*-p)} e^{-\Phi(p)\sigma_\eta}  \sum_{j} X^{1+p}_{\eta, j} e^{\Phi(p)\sigma_{\eta,j} },
\label{firstestimate}
 \end{eqnarray}
 where $\sigma_{\eta, j}$ is the time that the block $X_{\eta, j}$ enters the stopping time $\mathbf{X}_\eta$ and $ \sigma_\eta = \inf\{ t>0 :  X_1(t)<\eta\}$, the first time the largest block becomes smaller than $\eta$. We claim that  $\sigma_\eta<\infty$  almost surely finite. 
Indeed the almost sure finiteness of $\sigma_\eta$ is a consequence of the classical result that the speed of the largest particle obeys the strong law of large numbers 
\[
\lim_{t\uparrow\infty}-\frac{\log X_1(t)}{t} = \Phi'(\overline{p})>0 
\]
$\mathbb{P}$-almost surely. See for example Corollary 1.4 of Bertoin \cite{bertfragbook}. (The latter result was proved for homogenous fragmentation processes with finite dislocation measure, however the proof passes through verbatim for the case of infinite dislocation measure).
Note also that since $X(\cdot)$ is a right continuous process with inverse $\sigma_{(\cdot)}$, standard arguments for right continuous monotone functions and their inverses imply that 
\[
\lim_{\eta\downarrow 0}-\frac{\sigma_\eta}{    \log \eta} =\Phi'(\overline{p})
\]
$\mathbb{P}$-almost surely. We therefore may proceed with the estimate (\ref{firstestimate}) and deduce that for all $\eta$ sufficiently small and $\epsilon>0$
\[
 \sum_{
 j\in\mathcal{J}^{\rm c}_{\eta,s}
 } X^{1+p^*}_{\eta, j} 
 \leq 
 \eta^{s(p^*-p) + (1+\epsilon )\Phi(p)/\Phi'(\overline{p})} \sum_{j} X^{1+p}_{\eta, j} e^{\Phi(p)\sigma_{\eta,j} }.
\]
Using similar arguments to those found in the proof of Lemma \ref{firstlemma} one may show that the random sum on the right hand side above is the projection of the $L^1(\mathbb{P})$-martingale limit $\Lambda(p)$ onto the filtration $\mathcal{F}_\eta$ and therefore is a uniformly integrable martingale for $1\geq \eta >0$. It now follows that by choosing $s>-(1+\epsilon) \Phi(p)[(p^*-p)\Phi'(\overline{p})]^{-1}$ (recall that $\Phi(p)<0$) we have
\[
\lim_{\eta\downarrow 0} \sum_{
 j\in\mathcal{J}^{\rm c}_{\eta,s}
 } X^{1+p^*}_{\eta, j} =0
\]
as required. \hfill\end{proof}

The previous lemma allows us to establish the following result.

\begin{lemma}\label{a.s.approx} For all bounded measurable $f:[0,1]\rightarrow [0,\infty)$ and all  $s$ sufficiently large,
\[
\lim_{\eta\downarrow 0} |\mathbb{E} (\langle \rho_{\eta^s}, f\rangle |\mathcal{F}_{\eta}) - \langle\rho, f\rangle\Lambda(p^*)|
\]
$\mathbb{P}$-almost surely.
\end{lemma}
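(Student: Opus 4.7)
The strategy is to evaluate the conditional expectation by invoking the extended fragmentation property at the stopping line $\mathbf{X}_\eta$ together with the many-to-one identity of Lemma \ref{many-to-one}, and then to identify the limit via the overshoot convergence (\ref{sub-remark}) combined with Lemma \ref{little-bits}.

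First, by the extended fragmentation property and the homogeneity of $\mathbf{X}$, conditional on $\mathcal{F}_\eta$ each block $X_{\eta,j}$ seeds an independent evolution whose $\eta^s$-stopping line has the same law as $X_{\eta,j}$ times that of $\mathbf{X}_{\eta^s/X_{\eta,j}}$ under $\mathbb{P}$. Writing $x_j:=-\log(\eta^s/X_{\eta,j})=s\log(1/\eta)+\log X_{\eta,j}$ and applying Lemma \ref{many-to-one} to the inner expectation with $f$ rescaled to $u\mapsto f(X_{\eta,j}u/\eta^s)$, I would obtain
\[
\mathbb{E}(\langle\rho_{\eta^s},f\rangle\mid\mathcal{F}_\eta)=\sum_{j\in\mathcal{J}_{\eta,s}} X_{\eta,j}^{1+p^*}\,\mathbf{E}^{(p^*)}\bigl(f(e^{-(\xi_{\tau(x_j)}-x_j)})\bigr)+\sum_{j\in\mathcal{J}_{\eta,s}^{\rm c}} X_{\eta,j}^{1+p^*}\,f(X_{\eta,j}/\eta^s),
\]
where the second sum collects the blocks that are already smaller than $\eta^s$ at the moment they enter $\mathbf{X}_\eta$ (so their contribution is trivial, using the convention that $\mathbf{X}_\delta$ is the unit block for $\delta\geq 1$). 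Since $f$ is bounded, this second sum is dominated by $\|f\|_\infty\sum_{j\in\mathcal{J}_{\eta,s}^{\rm c}}X_{\eta,j}^{1+p^*}$, which vanishes almost surely by Lemma \ref{little-bits} as soon as $s\geq s_0$.

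For the leading sum I would further split according to whether $j\in\mathcal{J}_{\eta,s_0}$ or not. The piece with $j\in\mathcal{J}_{\eta,s_0}^{\rm c}\cap\mathcal{J}_{\eta,s}$ is again bounded by $\|f\|_\infty\sum_{j\in\mathcal{J}_{\eta,s_0}^{\rm c}}X_{\eta,j}^{1+p^*}\to 0$ by Lemma \ref{little-bits}. On $j\in\mathcal{J}_{\eta,s_0}$ we have $X_{\eta,j}\geq\eta^{s_0}$, yielding the crucial uniform lower bound $x_j\geq(s-s_0)\log(1/\eta)$, which diverges to $+\infty$ as $\eta\downarrow 0$. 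Fixing $\epsilon>0$ and using (\ref{sub-remark}), there is $x_0$ such that $|\mathbf{E}^{(p^*)}(f(e^{-(\xi_{\tau(x)}-x)}))-\langle\rho,f\rangle|<\epsilon$ for all $x\geq x_0$, and for all sufficiently small $\eta$ this approximation is valid uniformly over $j\in\mathcal{J}_{\eta,s_0}$. Factoring the uniform approximation out of the sum and invoking Lemma \ref{little-bits} once more to see $\sum_{j\in\mathcal{J}_{\eta,s_0}} X_{\eta,j}^{1+p^*}\to\Lambda(p^*)$ almost surely, the remaining sum converges $\mathbb{P}$-a.s.\ to $\langle\rho,f\rangle\Lambda(p^*)$ up to an error bounded by $\epsilon\Lambda(p^*)$; sending $\epsilon\downarrow 0$ closes the argument. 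The main obstacle is precisely in promoting the pointwise overshoot limit (\ref{sub-remark}) to a uniform statement over the random family $\{x_j\}_{j\in\mathcal{J}_{\eta,s_0}}$, and the choice of $s>s_0$ (together with the Malthusian weighting) is exactly what is required to force the $x_j$'s to diverge uniformly.
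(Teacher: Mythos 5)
Your proof is correct and takes essentially the same approach as the paper: apply the extended fragmentation property at the stopping line $\mathbf{X}_\eta$ together with Lemma \ref{many-to-one} to express the conditional expectation as a sum over blocks of $\mathbf{X}_\eta$ weighted by overshoot expectations, then isolate the dominant contribution (blocks with $X_{\eta,j}\geq\eta^{s_0}$ for a fixed threshold $s_0$) where the uniform lower bound $x_j\geq(s-s_0)\log(1/\eta)\to\infty$ lets you invoke (\ref{sub-remark}) uniformly, with Lemma \ref{little-bits} handling both the limit $\sum_{j\in\mathcal{J}_{\eta,s_0}}X_{\eta,j}^{1+p^*}\to\Lambda(p^*)$ and the vanishing of the remainder terms. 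The paper does the same thing with the threshold pair $(\eta,\eta^{2s})$ and the split $\mathcal{J}_{\eta,s}$, $\mathcal{J}_{\eta,2s}\setminus\mathcal{J}_{\eta,s}$, $\mathcal{J}^{\rm c}_{\eta,2s}$, which is just a reparametrization of your $(s_0,s)$ split.
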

\begin{proof}
We introduce the notation
\[
\psi(x,\eta) = \mathbf{E}^{(p^*)}(f(e^{-\xi_{\tau(-\log x)}}/\eta))
\]
for bounded measurable $f:[0,1]\rightarrow [0,\infty)$
noting in particular by (\ref{sub-remark}) that $\lim_{\eta\downarrow 0}\psi(\eta,\eta)=\langle\rho, f\rangle$.
For convenience we  prove the result when $\eta$ is replaced by $\eta^2$ or equivalently with $s$ replaced by $2s$.
By splitting the fragments in $\mathbf{X}_{\eta^{2s}}$ into descendants of fragments in $\mathbf{X}_{\eta}$ whose mass is no smaller than $\eta^s$,  descendants of fragments in $\mathbf{X}_{\eta}$ whose mass lies between $[\eta^{2s}, \eta^s)$ and fragments which belong to both $\mathbf{X}_\eta$ and $\mathbf{X}_{\eta^{2s}}$ we get with the help of the fragmentation property applied at the stopping line $\mathbf{X}_\eta$ and Lemma \ref{many-to-one} that
\begin{eqnarray}
\mathbb{E}(\langle\rho_{\eta^{2s}}, f\rangle|\mathcal{F}_\eta)&=&
\sum_{j\in \mathcal{J}_{\eta, s}}X^{1+p^*}_{\eta, j} \psi(\eta^{2s}/X_{\eta,j}, \eta^{2s}/X_{\eta,j}) \nonumber \\
&& +
\sum_{j\in \mathcal{J}_{\eta, 2s}\backslash\mathcal{J}_{\eta, s}}X^{1+p^*}_{\eta, j}\psi(\eta^{2s}/X_{\eta,j}, \eta^{2s}/X_{\eta,j})
\nonumber \\
&&+\sum_{j\in\mathcal{J}^{\rm c}_{\eta, 2s}}X^{1+p^*}_{\eta, j}f(X_{\eta,j}/\eta^{2s}).
\label{III}
\end{eqnarray}
Note that 
\[
\sup_{j\in \mathcal{J}_{\eta, s}}\eta^{2s}/X_{\eta,j}\leq \eta^s \rightarrow 0\qquad \mathbb{P}\mbox{-a.s.  as $\eta\downarrow 0$}
\]
and hence, using Lemma \ref{little-bits}, the first term on the right hand side of (\ref{III}) converges almost surely to $\langle\rho,f\rangle\Lambda(p^*)$. Without loss of generality we may assume that $f$ is uniformly bounded by $1$ in which case $\psi$ is uniformly bounded by $1$  and then  the second term on the right hand side of (\ref{III}) satisfies
\[
\lim_{\eta\downarrow 0}\sum_{j\in \mathcal{J}_{\eta, 2s}\backslash\mathcal{J}_{\eta, s}}X^{1+p^*}_{\eta, j}\psi(\eta^{2s}/X_{\eta,j}, \eta^{2s}/X_{\eta,j})\leq \lim_{\eta\downarrow 0}\sum_{j\in \mathcal{J}^{\rm c}_{\eta, s}}X^{1+p^*}_{\eta, j}=0
\]
$\mathbb{P}$-almost surely,
where the final equality follows by Lemma \ref{little-bits}. Similar reasoning shows that the third term on the right hand side of (\ref{III}) converges almost surely to zero. 
\hfill\end{proof}

The previous lemma showed that $\mathbb{E} (\langle \rho_{\eta^s}, f\rangle |\mathcal{F}_{\eta})$ is a good approximation for  $\langle\rho, f\rangle$. For a random variable $Y$ we denote the  $L^p(\mathbb{P})$ norm in the usual way,
$
||Y||_p= \mathbb{E}(|Y|^p)^{1/p}.
$
The next Proposition shows that $\mathbb{E} (\langle \rho_{\eta^s}, f\rangle |\mathcal{F}_{\eta})$ is also a good approximation for $\langle\rho_{\eta^{s}}, f\rangle$ but now with respect to the $L^p(\mathbb{P})$-norm. 
\begin{proposition}\label{Lp}
Suppose that $f:[0,1]\rightarrow[0,\infty)$ is bounded and measurable. Then there exist constants  $p\in(1,2]$ and $\kappa_p\in(0,\infty)$ such that 
 for all $s>1$  and $1\geq\eta>0$, 
\[
||\langle\rho_{\eta^{s}}, f\rangle- \mathbb{E} (\langle \rho_{\eta^s}, f\rangle |\mathcal{F}_{\eta})||_p\leq  \kappa_p \eta^{(p-1)(1+p^*)/p}.
\]
\end{proposition}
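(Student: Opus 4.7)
The plan is to exploit the extended fragmentation property at the stopping line $\mathbf{X}_\eta$. Each fragment of $\mathbf{X}_{\eta^s}$ is either descended from some $X_{\eta,j}$ with $j\in\mathcal{J}_{\eta,s}$ (which must fragment further since its size is $\geq\eta^s$) or coincides with some $X_{\eta,j}$ for $j\in\mathcal{J}_{\eta,s}^{\mathrm c}$. The second contribution is $\mathcal{F}_\eta$-measurable, so it cancels in the difference, yielding
\[
\langle\rho_{\eta^s},f\rangle-\mathbb{E}(\langle\rho_{\eta^s},f\rangle\mid\mathcal{F}_\eta)=\sum_{j\in\mathcal{J}_{\eta,s}}X_{\eta,j}^{1+p^*}\bigl(Y_j-\mathbb{E}(Y_j\mid\mathcal{F}_\eta)\bigr),
\]
where, by the fragmentation scaling property, the $Y_j$ are conditionally independent given $\mathcal{F}_\eta$, with $Y_j$ distributed as $\langle\rho_{\eta^s/X_{\eta,j}},f\rangle$ under $\mathbb{P}$.

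I would next take $p$ to be the exponent $p_0$ from (A3) (or any value in $(1,p_0]$) and apply the von Bahr--Esseen inequality conditionally on $\mathcal{F}_\eta$, which is valid for sums of independent mean-zero variables when $p\in(1,2]$:
\[
\mathbb{E}\bigl(|\langle\rho_{\eta^s},f\rangle-\mathbb{E}(\langle\rho_{\eta^s},f\rangle\mid\mathcal{F}_\eta)|^p\,\big|\,\mathcal{F}_\eta\bigr)\leq 2\sum_{j\in\mathcal{J}_{\eta,s}}X_{\eta,j}^{p(1+p^*)}\,\mathbb{E}\bigl(|Y_j-\mathbb{E}(Y_j\mid\mathcal{F}_\eta)|^p\,\big|\,\mathcal{F}_\eta\bigr).
\]
Assuming without loss of generality $\|f\|_\infty\leq 1$, we have $|Y_j|\leq\langle\rho_{\eta^s/X_{\eta,j}},1\rangle$, which by Lemma \ref{firstlemma} is a conditional expectation of $\Lambda(p^*)$ and so has $p$-th moment controlled by $\|\Lambda(p^*)\|_p$. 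Assumption (A3) ensures via the standard Biggins-type argument that the additive martingale $\sum_iX_i^{1+p^*}(t)$ is bounded in $L^{p_0}(\mathbb{P})$, hence $\|\Lambda(p^*)\|_p<\infty$, and the conditional $p$-th moments above are bounded by a universal constant $C_p$ uniformly in $j$ and $\eta$.

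Taking unconditional expectation and applying the many-to-one identity (Lemma \ref{many-to-one}) with $g(x)=x^{(p-1)(1+p^*)}$ gives
\[
\mathbb{E}\sum_{j}X_{\eta,j}^{p(1+p^*)}=\mathbf{E}^{(p^*)}\bigl(e^{-(p-1)(1+p^*)\xi_{\tau(-\log\eta)}}\bigr)\leq\eta^{(p-1)(1+p^*)},
\]
where the last step uses $\xi_{\tau(-\log\eta)}\geq-\log\eta$ together with $(p-1)(1+p^*)>0$. Extracting the $p$-th root produces the claimed bound with $\kappa_p=(2C_p)^{1/p}$. The main technical input is the $L^{p_0}$-boundedness of $\{\Lambda_t(p^*):t\geq 0\}$ under (A3), which is precisely what singles out the admissible exponent $p$; the rest is careful bookkeeping of the conditional independence provided by the extended fragmentation property combined with the first-passage estimate for the tagged-fragment subordinator $\xi$.
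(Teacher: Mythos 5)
Your decomposition at the stopping line $\mathbf{X}_\eta$, the conditional application of a von Bahr--Esseen/Biggins-type inequality, and the closing many-to-one estimate $\mathbb{E}\sum_j X_{\eta,j}^{p(1+p^*)}\leq\eta^{(p-1)(1+p^*)}$ all match the paper's argument and are correct (the last step is in fact a slightly cleaner packaging of the paper's pointwise bound $X_{\eta,j}<\eta$). In the conservative case your proof is complete, since there $\Delta_j\leq\langle\rho_{\eta_j},1\rangle=1$ deterministically.

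The genuine gap is in the dissipative case, at the point where you need $\sup_j\mathbb{E}(|Y_j|^p\mid\mathcal{F}_\eta)\leq C_p$. You reduce this to $\|\Lambda(p^*)\|_{p_0}<\infty$ and assert that (A3) delivers this ``via the standard Biggins-type argument.'' But that is precisely the hard technical content of the proposition, not an available input: in the paper the $L^{p_0}$-boundedness of the additive martingale is deduced \emph{as a corollary of} Proposition \ref{Lp} (via the Cauchy-in-$L^p$ argument immediately following it), so invoking it here is circular relative to the paper's logic, and no independent reference is cited or known for it in the infinite-activity fragmentation setting. Biggins' theorem for branching random walks would in any case require verifying $\mathbb{E}(\Lambda_1(p^*)^{p_0})<\infty$ for the discrete skeleton, which does not follow from (A3) without further work when $\nu(\mathcal{S})=\infty$. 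What the paper actually does is prove the uniform bound $\sup_{1\geq\eta>0}\mathbb{E}(\langle\rho_\eta,f\rangle^{p_0})\leq K_{p_0}$ directly by a spine computation: writing $\mathbb{E}(\langle\rho_\eta,f\rangle^{p})\leq\mathbb{E}^{(p^*)}[\mathbb{E}^{(p^*)}(\langle\rho_\eta,f\rangle\mid\mathcal{G})^{q}]$ with $q=p-1<1$ by Jensen, decomposing $\langle\rho_\eta,f\rangle$ along the spine, using subadditivity of $x\mapsto x^q$, the compensation formula for the Poisson point process driving the spine, the positivity of $\Phi^{(p^*)}(q(1+p^*))$, and finally (A3) to control $\int_{\mathcal{S}}(\sum_i s_i^{1+p^*})^{p_0}\nu(d\mathbf{s})$. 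You need to supply this (or an equivalent) argument; without it the dissipative case is unproved.
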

\begin{proof}
Begin by noting that fragments in $\mathbf{X}_{\eta^s}$ are either descendants of fragments in $\mathbf{X}_\eta$ or belong themselves to $\mathbf{X}_\eta$. With the help of the extended fragmentation property applied at the stopping line $\mathbf{X}_\eta$, this incurs the decomposition
\begin{eqnarray}
\lefteqn{
\langle\rho_{\eta^{s}}, f\rangle- \mathbb{E} (\langle \rho_{\eta^s}, f\rangle |\mathcal{F}_{\eta})
}\nonumber\\
& =&
\sum_{j\in\mathcal{J}^{\rm c}_{\eta, s}}X^{1+p^*}_{\eta,j}f(X_{\eta,j}/\eta) - 
 \mathbb{E}\left(\left.\sum_{j\in\mathcal{J}^{\rm c}_{\eta, s}}X^{1+p^*}_{\eta,j}f(X_{\eta,j}/\eta)\right|\mathcal{F}_\eta\right)\nonumber\\
&&+\sum_{j\in\mathcal{J}_{\eta, s}}X^{1+p^*}_{\eta,j}\left(\Delta_j - \mathbb{E}(\Delta_j|\mathcal{F}_\eta)\right)\nonumber\\
&=&\sum_{j\in\mathcal{J}_{\eta, s}}X^{1+p^*}_{\eta,j}\left(\Delta_j - \mathbb{E}(\Delta_j|\mathcal{F}_\eta)\right),
\label{deltas}
\end{eqnarray}
where, given $\mathcal{F}_\eta$, $\Delta_j$ are independent, each having the same law as $\langle\rho_{\eta_j}, f\rangle$ under $\mathbb{P}$ with $\eta_j=\eta^s/X_{\eta,j}$.

For the next part of the proof we need to make use of two inequalities. The first  is lifted from Lemma 1 of Biggins \cite{big92}. For independent, zero mean random variables $\{Y_1,\cdots, Y_n\}$ with reference expectation operator $E$ and $p\in[1,2]$ we have
\begin{equation}
E\left(\left|\sum_{i=1}^n Y_i\right|^p\right)\leq 2^p \sum_{i=1}^n E(|Y_i|^p).
\label{first}
\end{equation}
Note that an easy calculation using Fatou's Lemma also implies that if one has  an infinite sequence of such variables then 
the same inequality holds except with infinite sums. The second inequality is a direct consequence of Jensen's inequality and says that for all $u,v\in\mathbb{R}$ and $p\geq 1$, 
\begin{equation}
|u+v|^p\leq 2^{p-1}(|u|^p+|v|^p).
\label{second}
\end{equation}
We may now proceed to compute for any $p\in(1,2]$
\begin{eqnarray}
\lefteqn{
\mathbb{E}\left[\left.\left|\langle\rho_{\eta^{s}}, f\rangle- \mathbb{E} (\langle \rho_{\eta^s}, f\rangle |\mathcal{F}_{\eta})\right|^p\right|
\mathcal{F}_{\eta}
\right]
}\nonumber\\
&\leq & 2^p\sum_{j\in\mathcal{J}_{\eta, s}}X_{\eta,j}^{(1+p^*)p}
\mathbb{E}
\left(\left.
\left|\Delta_j - \mathbb{E}(\Delta_j|\mathcal{F}_\eta)
\right|^p
\right| \mathcal{F}_\eta \right)\nonumber\\
&\leq& 2^{2p-1} 
\sum_{j\in\mathcal{J}_{\eta, s}}X_{\eta,j}^{(1+p^*)p}
\mathbb{E}
\left(\left.
\Delta_j ^p+ \mathbb{E}(\Delta_j|\mathcal{F}_\eta)
^p
\right| \mathcal{F}_\eta \right),
\label{expectations-again}
\end{eqnarray}
where in the first inequality we have used (\ref{deltas}) and  (\ref{first}) for infinite sums and in the second inequality we have used (\ref{second}). We spilt the remainder of the proof into the cases that $\nu$ is conservative and dissipative, respectively.

Suppose first that $\nu$ is conservative. Without loss of generality we may again assume that $f$ is uniformly bounded by 1, in which case $\Delta_j\leq \langle\rho_{\eta_j}, 1\rangle= 1$ for all $j\in\mathcal{J}_{\eta, s}$. It follows from (\ref{expectations-again}) that 
\begin{eqnarray}
\mathbb{E}\left[\left|\langle\rho_{\eta^{s}}, f\rangle- \mathbb{E} (\langle \rho_{\eta^s}, f\rangle |\mathcal{F}_{\eta})\right|^p\right]
&\leq& 2^{2p} \mathbb{E}\left(\sum_{j\in\mathcal{J}_{\eta, s}}X_{\eta,j}^{(1+p^*)p}\right)\nonumber\\
&\leq&  2^{2p}\eta^{(p-1)(1+p^*)}\mathbb{E}\left(\sum_j X^{1+p^*}_{\eta,j}\right)\nonumber\\
&=&  2^{2p}\eta^{(p-1)(1+p^*)},
\label{similarto}
\end{eqnarray}
where we have used that $\sum_j X^{1+p^*}_{\eta,j} =\sum_j X_{\eta,j} =1.$
Taking the $p$-th  root  completes the proof for the conservative case.

Next assume that $\nu$ is dissipative. Continuing from the last inequality in (\ref{expectations-again}) we may apply Jensen's inequality to deduce that 
\begin{equation}
\mathbb{E}\left[\left|\langle\rho_{\eta^{s}}, f\rangle- \mathbb{E} (\langle \rho_{\eta^s}, f\rangle |\mathcal{F}_{\eta})\right|^p
\right]
\leq 
2^{2p} 
\mathbb{E}\left[\sum_{j\in\mathcal{J}_{\eta, s}}X_{\eta,j}^{(1+p^*)p}
\mathbb{E}
(\Delta_j ^p| \mathcal{F}_\eta )
\right].
\label{comebackto}
\end{equation}
In order to proceed with the estimate on the right hand side above and in particular to deal with the terms $\mathbb{E}
(\Delta_j ^p| \mathcal{F}_\eta )$, it is first necessary to make an estimate on the quantity $\sup_{1\geq \eta>0}\mathbb{E}(\langle \rho_\eta, f\rangle^p)$.  As usual we assume without loss of generality that $f$ is uniformly bounded by 1. We shall also henceforth proceed with our calculations taking $p = p_0$, where $p_0$ was specified in (A3).
Write $q=p-1$.  To complete the proof, we pursue a series of computations inspired by ideas which are found in Hardy and Harris \cite{spine}. We have 
\begin{eqnarray}
\mathbb{E}(\langle \rho_\eta, f\rangle^p)&\leq& \mathbb{E}(\langle \rho_\eta, 1\rangle \langle \rho_\eta, f\rangle^{q})\nonumber\\
&=&\mathbb{E}^{(p^*)}\left[ \mathbb{E}^{(p^*)}(\langle \rho_\eta, f\rangle^{q} | \mathcal{G})\right]\nonumber\\
&\leq &\mathbb{E}^{(p^*)}\left[ \mathbb{E}^{(p^*)}(\langle \rho_\eta, f\rangle | \mathcal{G})^{q}\right]
\label{referbackto}
\end{eqnarray}
where $\mathcal{G} =\sigma\{(\pi(t), k(t)): t\geq 0, \, k(t)=1\} $ is the $\sigma$-algebra generated by the spine  and the final line above follows from Jensen's inequality. Appealing to the spine decomposition described at the beginning of this section, 
and writing $T_\eta=\inf\{t>0 : |\Pi_1(t)|<\eta\}$,
we may further decompose 
\begin{eqnarray*}
\langle \rho_\eta, f\rangle 
&=& |\Pi_1(T_\eta)|^{1+p^*}f(|\Pi_1(T_\eta)|/\eta)\\
&&+\sum_{t\leq T_\eta}\mathbf{1}_{\{k(t)=1\}} |\Pi_1(t-)|^{1+p^*}\sum_{i=2}^\infty |\pi_i(t)|^{1+p^*}\Delta_{i},
\end{eqnarray*}
where, given $\mathcal{G}$, for each $i=2,3,\cdots$ the random variables $\Delta_i$ are independent and have the same distribution as $\langle\rho_{\eta_i}, f\rangle$ under $\mathbb{P}$ with $\eta_i = \eta/( |\Pi_1(t-)||\pi_i(t)|)$. Note in particular that thanks to the uniform boundedness of $f$,  $\Delta_i\leq 1$ for all $i=2,3,\cdots$. Taking account of the inequality $(\sum_{j=1}^\infty a_j)^q \leq \sum_{j=1}^\infty a_j^q$, which holds for any non-negative sequence of numbers $\{a_j: j\geq 1\}$, using again the uniform boundedness of $f$ by unity, referring back to (\ref{referbackto}), making use of the compensation formula for Poisson random measures and recalling that under $\mathbb{P}^{(p^*)}$ the process $-\log |\Pi_1(\cdot)|$ is equal in law to $(\xi, \mathbf{P}^{(p^*)})$, we get 
the following estimate,
\begin{eqnarray}
\lefteqn{\mathbb{E}(\langle \rho_\eta, f\rangle^p)}&&\nonumber\\
&\leq& \mathbb{E}^{(p^*)}\left[ |\Pi_1(T_\eta)|^{(1+p^*)q}+\sum_{t<\infty}\mathbf{1}_{\{k(t)=1\}} |\Pi_1(t-)|^{1+p^*}\left(\sum_{i=1}^\infty |\pi_i(t)|^{1+p^*}\right)^q
\right]\nonumber\\
&\leq&1 +
\int_0^\infty   \mathbf{E}^{(p^*)}(e^{-q(1+p^*)\xi_{t}})dt\cdot
\int_{\mathcal{P}} \left(\sum_{i=1}^\infty |\pi_i|^{1+p^*}\right)^q\mu^{(p^*)}(d\pi)\nonumber\\
&=&1 +
\int_0^\infty  e^{-\Phi^{(p^*)}(q(1+p^*)) t} dt\cdot
\int_{\mathcal{P}} \left(\sum_{i=1}^\infty |\pi_i|^{1+p^*}\right)^q |\pi_1|^{p^*}\mu(d\pi).
\label{strangeintegral}
\end{eqnarray}
Both integrals on the right hand side of (\ref{strangeintegral}) converge. The first integral converges because 
\[
\Phi^{(p^*)}(q(1+p^*))=\Phi(p^* + (p-1)(1+p^*))=\Phi(p(1+p^*)-1)>\Phi(p^*)=0.
\]
The second integral can be shown, with the help of (\ref{mutonu}), to satisfy
 \begin{eqnarray*}
\int_{\mathcal{P}} \left(\sum_{i=1}^\infty |\pi_i|^{1+p^*}\right)^q |\pi_1|^{p^*}\mu(d\pi)
&=&\int_{\mathcal{S}}  \left(\sum_{i=1}^\infty |s_i|^{1+p^*}\right)^q \left(\sum_{i=1}^\infty s_i \cdot s_i^{p^*}\right)\nu(d{\bf s})\\
&=&\int_{\mathcal{S}}  \left(\sum_{i=1}^\infty |s_i|^{1+p^*}\right)^p \nu(d{\bf s}),
 \end{eqnarray*}
which is finite thanks to assumption (A3).

 We have thus shown that there is a constant $K_p$ (which depends only on $p$) such that 
\[
\sup_{1\geq \eta>0}\mathbb{E}(\langle \rho_\eta, f\rangle^p)\leq K_p.
\]

Now returning to (\ref{comebackto}) we may upper estimate $\sup_{j\in\mathcal{J}_{\eta,s}}\mathbb{E}
(\Delta_j ^p| \mathcal{F}_\eta )$ by $K_p$. The remainder of the proof for the dissipative case follows the reasoning that was presented in conjuction with  (\ref{similarto}) for the conservative case, except now we use that $\mathbb{E}(\sum_j X^{1+p^*}_{\eta,j})=1$ instead of just $\sum_j X^{1+p^*}_{\eta,j}=1$.
\hfill\end{proof}

Before proceeding to the proof of the main theorem, let us conclude this section by stating a corollary  of the previous Proposition which, apart from being contemporary with a similar classical result for various spatial branching processes (see for example Hardy and Harris \cite{spine}), is otherwise of no consequence as far as the remainder of the paper is concerned. 

\begin{corollary}
Suppose that $\nu$ is a dissipative measure. Then the martingale $\{\langle\rho_\eta, 1\rangle: 1\geq \eta>0\} $, and hence the martingale $\{\Lambda_t(p^*): t\geq 0\}$, converges in $L^{p}(\mathbb{P})$, where $p= p_0$ and $p_0$ was specified in assumption (A3). 
\end{corollary}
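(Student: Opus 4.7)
The plan is to extract the $L^{p_0}$ moment bound that was derived en route in the proof of Proposition \ref{Lp} and then apply a standard martingale convergence argument, combined with Lemma \ref{firstlemma}.

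First I would observe that the dissipative-case computation in the proof of Proposition \ref{Lp}, specifically the chain of estimates culminating in (\ref{strangeintegral}), yields as an intermediate conclusion the uniform moment bound
$$
\sup_{1\geq \eta > 0}\mathbb{E}\bigl(\langle \rho_\eta, f\rangle^{p_0}\bigr) \leq K_{p_0} < \infty
$$
for any bounded nonnegative $f$ with $\|f\|_\infty\leq 1$; finiteness of $K_{p_0}$ rests precisely on assumption (A3), which ensures convergence of the spine integral $\int_{\mathcal{S}}(\sum_i s_i^{1+p^*})^{p_0}\nu(d{\bf s})$. Specializing to $f\equiv 1$ gives the uniform $L^{p_0}$ bound on the martingale $\{\langle\rho_\eta, 1\rangle : 1\geq \eta > 0\}$ that is needed, with no fresh calculation required.

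Next I would combine this bound with Lemma \ref{firstlemma}, which identifies $\{\langle\rho_\eta, 1\rangle\}_{1\geq \eta > 0}$ as a nonnegative martingale in $\{\mathcal{F}_\eta\}$, indexed by decreasing $\eta$, converging $\mathbb{P}$-almost surely to $\Lambda(p^*)$. Applying Doob's $L^p$ maximal inequality along an arbitrary countable sequence $\eta_n\downarrow 0$, and using that $p_0>1$, gives
$$
\mathbb{E}\Bigl(\sup_{n}\langle\rho_{\eta_n},1\rangle^{p_0}\Bigr)\leq \Bigl(\tfrac{p_0}{p_0-1}\Bigr)^{p_0}K_{p_0},
$$
producing an $L^1$-integrable dominator for $|\langle\rho_{\eta_n},1\rangle - \Lambda(p^*)|^{p_0}$. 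Dominated convergence then upgrades the almost-sure limit to $L^{p_0}(\mathbb{P})$-convergence of $\langle\rho_\eta, 1\rangle$ to $\Lambda(p^*)$.

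In particular $\Lambda(p^*)\in L^{p_0}(\mathbb{P})$, and the second assertion of the corollary follows immediately. Indeed, since $\{\Lambda_t(p^*):t\geq 0\}$ is known to converge in $L^1(\mathbb{P})$ to $\Lambda(p^*)$, the martingale property forces $\Lambda_t(p^*)=\mathbb{E}(\Lambda(p^*)\mid \sigma\{\mathbf{X}(s):s\leq t\})$, and this closed conditional-expectation martingale generated by an $L^{p_0}$-integrable random variable converges in $L^{p_0}(\mathbb{P})$ by the classical $L^p$-martingale convergence theorem. I do not anticipate any substantive obstacle; the only minor subtlety, the continuous-parameter indexing of $\langle\rho_\eta,1\rangle$, is dispatched by applying Doob along a countable dense sequence $\eta_n\downarrow 0$.
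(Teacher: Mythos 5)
Your proof is correct, and it takes a mildly different route from the paper's. The paper's own proof works with the \emph{statement} of Proposition \ref{Lp}: taking $f\equiv 1$ there and using $\mathbb{E}(\langle\rho_{\eta_2},1\rangle\mid\mathcal{F}_{\eta_1})=\langle\rho_{\eta_1},1\rangle$, the bound $\|\langle\rho_{\eta_2},1\rangle-\langle\rho_{\eta_1},1\rangle\|_{p_0}\leq \kappa_{p_0}\eta_1^{(p_0-1)(1+p^*)/p_0}$ shows the family is Cauchy in $L^{p_0}(\mathbb{P})$, which delivers both $\Lambda(p^*)\in L^{p_0}(\mathbb{P})$ and $L^{p_0}$-convergence in one stroke (the paper then remarks that Doob and dominated convergence finish things off). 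You instead reach inside the \emph{proof} of Proposition \ref{Lp} to extract the uniform moment bound $\sup_{1\geq\eta>0}\mathbb{E}(\langle\rho_\eta,1\rangle^{p_0})\leq K_{p_0}$ (which the paper does state explicitly as an intermediate conclusion), and then apply the textbook Doob maximal inequality plus dominated convergence, first along a countable sequence $\eta_n\downarrow 0$ and then for the continuous parameter via right-continuity. The two arguments use the same underlying spine estimate and assumption (A3) in the same place; yours is a shade more self-contained and textbook-standard, while the paper's is slightly more economical by leaning only on the Proposition's stated conclusion rather than its proof. Your handling of the second assertion, via the identity $\Lambda_t(p^*)=\mathbb{E}(\Lambda(p^*)\mid\sigma\{\mathbf{X}(s):s\leq t\})$ and the closed $L^p$-martingale convergence theorem, is correct and is in fact a bit more explicit than what the paper writes, which leaves this step implicit.
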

\begin{proof}
The proof is complete, thanks to standard arguments using dominated convergence and Doob's maximal inequality, as soon as it can be shown that 
\begin{equation}
\mathbb{E}\left(\lim_{ \eta \downarrow 0} \langle\rho_{\eta}, 1\rangle^p \right)<\infty.
\label{thenwearedone}
\end{equation}
However, taking $f=1$ in the statement of Proposition \ref{Lp} and noting that $\mathbb{E}(\langle\rho_{\eta_2},1\rangle|\mathcal{F}_{\eta_1}) = \langle\rho_{\eta_1},1\rangle$, we see that when $p=p_0$, for all $1\geq \eta_1\geq\eta_2>0$,
\[
\lim_{\eta_1,\eta_2\downarrow 0}|| \langle\rho_{\eta_1}, 1\rangle- \langle\rho_{\eta_2}, 1\rangle||_p = 0,
\]
showing that $\{\langle\rho_\eta, 1\rangle: 1\geq \eta>0\}$ is a Cauchy family in the space of $L^{p}(\mathbb{P})$ right continuous martingales adapted to $\{\mathcal{F}_\eta: 1\geq \eta>0\}$. It follows that the almost sure limit point $\Lambda(p^*)=\lim_{\eta\downarrow 0 }\langle\rho_\eta, 1\rangle$ also belongs to $L^{p}(\mathbb{P})$, and hence (\ref{thenwearedone}) holds.
\hfill\end{proof}

\section{Proof of Theorem \ref{main}}

The  proof appeals to ideas which are are inspired by the analysis of branching processes appearing in Asmussen and Herring \cite{AH76}. First we prove almost sure convergence of $\langle\rho_\eta, f\rangle$ on log-lattice sequences and this is then upgraded to convergence along the continuous sequence $1\geq \eta>0$.

We start by appealing to the Markov-Chebyshev inequality followed by an application of Proposition \ref{Lp} to deduce that for any $\delta,\epsilon>0$, $p=p_0$ as specified in (A3),  $m=2,3,\cdots$ and bounded measurable $f:[0,1]\rightarrow[0,\infty)$
\begin{eqnarray*}
\lefteqn{\sum_{n=1}^\infty \mathbb{P}\left[\left|\langle\rho_{e^{-mn\delta}}, f\rangle- \mathbb{E} (\langle \rho_{e^{-mn\delta}}, f\rangle |\mathcal{F}_{e^{-n\delta}})\right|>\epsilon\right]}\\
&\leq &\frac{1}{\epsilon^p}\sum_{n=1}^\infty||\langle\rho_{e^{-mn\delta}}, f\rangle- \mathbb{E} (\langle \rho_{e^{-mn\delta}}, f\rangle |\mathcal{F}_{e^{-n\delta}})||_p^p\\
&\leq&\frac{\kappa_p^p}{\epsilon^p}\sum_{n=1}^\infty e^{-n\delta(p-1)/p}<\infty
\end{eqnarray*}
Together with the Borel-Cantelli Lemma this implies that, for any $\delta>0$,
\begin{equation}
\left|\langle\rho_{e^{-mn\delta}}, f\rangle- \mathbb{E} (\langle \rho_{e^{-mn\delta}}, f\rangle |\mathcal{F}_{e^{-n\delta}})\right|\rightarrow 0
\label{lattice1}
\end{equation}
$\mathbb{P}$-almost surely as $n\uparrow\infty$. 

Next appealing to the triangle inequality, we have for $s>1$  and $1\geq\eta>0$ that
\begin{eqnarray*}
\left| \langle\rho_{e^{-mn\delta}}, f\rangle-\langle\rho, f\rangle\right|&\leq &\left|\langle\rho_{e^{-mn\delta}}, f\rangle- \mathbb{E} (\langle \rho_{e^{-mn\delta}}, f\rangle |\mathcal{F}_{e^{-n\delta} })\right|\\
&&+ |\mathbb{E} (\langle \rho_{e^{-mn\delta}}, f\rangle |\mathcal{F}_{\eta}) - \langle\rho, f\rangle \Lambda(p^*)|
\end{eqnarray*}
and hence (\ref{lattice1}) and Lemma \ref{a.s.approx} imply that there exists a  natural number $m_0$ such that for each $m\geq m_0$ and $\delta>0$,
\[
\lim_{n\uparrow\infty} \langle\rho_{e^{-mn\delta}}, f\rangle   =  \langle\rho, f\rangle\Lambda(p^*)
\]
$\mathbb{P}$-almost surely.
Since we may choose $\delta>0$ in an arbitrary way (it does not depend on the value $m_0$), we may rescale $\delta$ by $m^{-1}$ and improve the above convergence to deduce
\begin{equation}
\lim_{n\uparrow\infty} \langle\rho_{e^{-n\delta}}, f\rangle   =  \langle\rho, f\rangle\Lambda(p^*) \qquad \mathbb{P}\mbox{-a.s.}
\label{improved}
\end{equation}
for any $\delta>0$.

To remove the assumption that convergence holds only along log-lattice sequences, fix $\delta>0$ and take $t\in(n\delta, (n+1)\delta)$ for $n=1,2,\cdots$.  Let us  extend the domain of $f$ to $[0,\infty)$ by defining $f(x)=0$ for $x>1$. Amongst those fragments in the stopping line $\mathbf{X}_{e^{-t}}$ are  fragments which are also to be found in $\mathbf{X}_{e^{-n\delta}}$.
It follows that 
\begin{eqnarray*}
\langle\rho_{e^{-t}}, f\rangle
&=& \sum_{
j\in\mathcal{J}^{\rm c}_{e^{-t}, 1}
\cap
\mathcal{J}^{\rm c}_{e^{-n\delta}, t/n\delta}
} X^{1+p^*}_{e^{-t}, j}f(X_{e^{-t},j }/e^{-t})\\
&&\hspace{2cm}+ 
\sum_{
j\in\mathcal{J}^{\rm c}_{e^{-t}, 1}
\backslash
\mathcal{J}^{\rm c}_{e^{-n\delta}, t/n\delta}
}  X^{1+p^*}_{e^{-t}, j}f(X_{e^{-t},j }/e^{-t})\\
&\geq& \sum_{j\in\mathcal{J}^{\rm c}_{e^{-n\delta}, t/n\delta}} X^{1+p^*}_{e^{-n\delta}, j} f(X_{e^{-n\delta},j }/e^{-t})\\
&=& \sum_{j} X^{1+p^*}_{e^{-n\delta}, j} f(X_{e^{-n\delta},j }/e^{-t})\\
&=&
\sum_j X^{1+p^*}_{e^{-n\delta}, j} f( e^{t-n\delta} X_{e^{-n\delta},j }/e^{-n\delta} ).
\end{eqnarray*}
If we assume in addition that $f$ is continuous and compactly supported (and therefore uniformly continuous) then for any given $\varepsilon>0$ we have that there exists a $\delta_0>0$ such that whenever $\delta<\delta_0$,  for all $x\in\mbox{supp}f$, $f(x e^{t-n\delta} ) \geq f(x) -\varepsilon$. In that case we have with the help of (\ref{improved}) that
\[
\liminf_{t\uparrow\infty}\langle\rho_{e^{-t}}, f\rangle\geq \liminf_{n\uparrow\infty}\langle\rho_{e^{-n\delta}}, f\rangle - \varepsilon\langle\rho_{e^{-n\delta}}, 1\rangle 
 = (\langle\rho, f\rangle-\varepsilon) \Lambda(p^*) 
\]
$\mathbb{P}$-almost surely for all $\delta<\delta_0$.
As $\varepsilon>0$ can be chosen arbitrarily small, we have that for all continuous $f:[0,1]\rightarrow[0,\infty)$ which are compactly supported,
\[
\liminf_{t\uparrow\infty}\langle\rho_{e^{-t}}, f\rangle\geq \langle\rho, f\rangle\Lambda(p^*) 
\]
$\mathbb{P}$-almost surely.
Next, given any continuous bounded $f:[0,1]\rightarrow[0,\infty)$, suppose that $\{f_n:n=1,2,\cdots\}$ is a sequence of continuous and compactly supported positive functions such that $f_n\uparrow f$ in the pointwise sense.
It follows that 
\begin{eqnarray}
\liminf_{t\uparrow\infty}\langle\rho_{e^{-t}}, f\rangle&=&\lim_{k\uparrow\infty}\liminf_{t\uparrow\infty}\langle\rho_{e^{-t}}, f\rangle\nonumber\\
&\geq & \lim_{k\uparrow\infty}\liminf_{t\uparrow\infty}\langle\rho_{e^{-t}}, f_k\rangle\nonumber\\
&\geq & \lim_{k\uparrow\infty}\langle\rho, f_k\rangle\Lambda(p^*)\nonumber\\
&=&\langle\rho, f\rangle\Lambda(p^*)
\label{liminf}
\end{eqnarray}
$\mathbb{P}$-almost surely, 
where the final equality follows from the monotone convergence theorem taking account of the expression in (\ref{usemct}). If $f:[0,1]\rightarrow [0,\infty)$ is now a  positive, bounded and measurable function, then it can be approximated from below by an increasing sequence of positive continuous functions and the computations in (\ref{liminf}) go through verbatim. 
On the other hand, for such an $f$, assuming without loss of generality that it is uniformly bounded by $1$,  we have, recalling that $\{\langle\rho_{e^{-t}},1\rangle: t\geq 0\}$ is a martingale with almost sure limit $\Lambda(p^*)$,
\begin{eqnarray*}
\limsup_{t\uparrow\infty}\langle\rho_{e^{-t}}, f\rangle 
&\leq& \lim_{t\uparrow\infty}\langle\rho_{e^{-t}}, 1\rangle - \liminf_{t\uparrow\infty} \langle\rho_{e^{-t}}, (1-f)\rangle\\
&\leq&[1- \langle\rho, (1-f)\rangle]\Lambda(p^*)\\
&=& \langle\rho,f\rangle \Lambda(p^*)
\end{eqnarray*}
$\mathbb{P}$-almost surely,
where in the last inequality we have used (\ref{liminf}) and in the final equality we have used that $\langle\rho, 1\rangle=1$.
This completes the proof of Theorem \ref{main}. \hfill$\square$

\end{document}